\newtheorem{theorem}{Theorem}[section]
\newtheorem{lemma}[theorem]{Lemma}
\newtheorem{corollary}[theorem]{Corollary}
\theoremstyle{definition}
\newtheorem{definition}[theorem]{Definition}
\def\mda#1{\downarrow \rlap{$\vcenter{\hbox{$\scriptstyle#1$}}$}}
\begin{document}

\author[A.R. Chekhlov]{Andrey R. Chekhlov}
\address{Department of Mathematics and Mechanics, Tomsk State University, 634050 Tomsk, Russia}
\email{cheklov@math.tsu.ru; a.r.che@yandex.ru}
\author[P.V. Danchev]{Peter V. Danchev}
\address{Institute of Mathematics and Informatics, Bulgarian Academy of Sciences, 1113 Sofia, Bulgaria}
\email{danchev@math.bas.bg; pvdanchev@yahoo.com}
\author[P.W. Keef]{Patrick W. Keef}
\address{Department of Mathematics, Whitman College, Walla Walla, WA 99362, USA}
\email{keef@whitman.edu}

\title[Generalizing the Bassian and co-Bassian Properties for Abelian Groups]{Generalizations of the Bassian and co-Bassian \\ Properties for Abelian Groups}
\keywords{Abelian groups; (co-)Bassian groups; Generalized (co-)Bassian groups, Finitely (co-)Bassian group, Semi (co-)Bassian groups, Fully generalized (co-)Bassian groups, Absolutely generalized (co-)Bassian groups}
\subjclass[2010]{20K10}

\maketitle

\begin{abstract} Trying to finalize in some way the present subject, this paper targets to generalize substantially the notions of Bassian and co-Bassian groups by introducing the so-called {\it finitely (co-)Bassian groups}, {\it semi (co-)Bassian groups}, {\it fully generalized (co-)Bassian groups}, {\it absolutely generalized (co-)Bassian groups} and establishing their crucial properties and characterizations. In fact, some of the concepts give nothing new by coinciding in the reduced case with the well-known (co-)Bassian property. However, in some of the definitions, the situation is slightly more complicated and we obtain a few new and interesting things by showing that the extensions of the Bassian and co-Bassian properties are totally distinct each other.
\end{abstract}

\medskip
\medskip
\medskip

\centerline{(Dedicated to the anniversary of the {\bf100}th birthday of L\'aszl\'o Fuchs)}

\vskip2.0pc

\section{Definitions and Motivations}

All groups considered in the current paper will be Abelian and additively written. Our notations and terminology, as usual, will mainly agree with the classical books \cite{F1} and \cite{F}, respectively. For instance, if $G$ is a group, the letter $T=T(G)$ will denote its maximal torsion subgroup with $p$-primary component $T_p=T_p(G)$. Likewise, if $x\in G$, then the symbol $|x|_p$ will denote its $p$-height as computed in $G$. Also, for any integer $n\geq 1$, the notation $\mathbb Z(p^n)$ will stand for the traditional cyclic group of order $p^n$.

\medskip

The following concepts are central to our investigations (see \cite{CDG1}, \cite{CDG2}, \cite{DK}, \cite{CDK} as well): a group $G$ is said to be {\it Bassian} if it cannot be embedded in a proper homomorphic image of itself or, equivalently, if the existence of an injective homomorphism $G \to G/N$, for some subgroup $N$ of $G$, forces that $N = \{0\}$.

\medskip

Generally, if this injection implies that $N$ is a direct summand of $G$, then $G$ is named {\it generalized Bassian} and, in a more general form, if $N$ is essential in a direct summand of $G$, then $G$ is termed {\it semi-generalized Bassian}.

\medskip

Restated slightly, the Main Theorem of \cite{CDG1} asserts that {\it a group $G$ is Bassian if, and only if, it has finite torsion-free rank and, for all primes $p$, $T_p$ is also finite}. Thus, it is pretty clear that the standard quasi-cyclic $p$-group $\mathbb{Z}(p^{\infty})$ is definitely {\it not} Bassian.

\medskip

Unfortunately, the generalized Bassian groups were not completely characterized in \cite{CDG2}, but it was shown there that if $G$ is generalized Bassian, then $pT_p$ must be finite for all primes $p$ (\cite[Lemma 3.1]{CDG2}) as well as it was established in \cite{DK} that such a group $G$ has finite torsion-free rank.

\medskip

We now can state the following four new specifications of these Bassian properties.

\begin{definition}\label{finite} A group $G$ is called {\it finitely Bassian} provided that, for any subgroup $N$ of $G$, whenever there is an injection $G\to G/N$, then $N$ is finite.
\end{definition}

It is pretty clear that Bassian groups are finitely Bassian, and also it is obvious that torsion-free finitely Bassian groups are Bassian. Generally, we shall curiously illustrate in what follows even that the classes of finitely Bassian and Bassian groups do coincide in the reduced case.

\medskip

We just now recall that an arbitrary subgroup $E$ of a group $G$ is {\it essential} in $G$ if, for any non-zero subgroup $S$ of $G$, the intersection between $E$ and $S$ is also non-zero. It is an elementary exercise to see that every subgroup will be essential as a subgroup of itself and thus, in particular, the zero subgroup $\{0\}$ is essential in itself too.

\medskip

So, we introduce the following.

\begin{definition}\label{semi} A group $G$ is called {\it semi Bassian} provided that, for any subgroup $N$ of $G$, whenever there is an injection $G\to G/N$, then either $N=\{0\}$ or $N$ is essential in $G$.
\end{definition}

Thus, under these circumstances, each Bassian group is semi Bassian, but we shall demonstrate in the sequel that the converse manifestly fails.

\medskip

We recall that a subgroup $F$ of a group $G$ is {\it fully invariant} if $\phi(F)\subseteq F$ for all endomorphism $\phi$ of $G$. So, we arrive at the following.

\begin{definition}\label{fully} A group $G$ is called {\it fully generalized Bassian} provided that, for any subgroup $N$ of $G$, whenever there is an injection $G\to G/N$, then $N$ is a fully invariant direct summand of $G$.
\end{definition}

It is quite apparent that these groups are situated between Bassian groups and generalized Bassian groups. We, however, will show in the sequel that the converse manifestly fails, i.e., these three classes are independent each other, thus motivating the study in-depth of this interesting class of groups.

\medskip

We recall that a subgroup $A$ of a group $G$ is its {\it absolute direct summand} if, for any $A$-high subgroup $V$ of $G$, it must be that $G=A\oplus V$. By definition, such a group $V$ is maximal with respect to $V\cap A=\{0\}$.

\begin{definition}\label{absolutely} A group $G$ is called {\it absolutely generalized Bassian} provided that, for any subgroup $H$ of $G$, whenever there is an injection $G\to G/H$, then $H$ is an absolute direct summand of $G$.
\end{definition}

It is rather evident that these so-defined groups are situated between Bassian groups and generalized Bassian groups. We, however, will show in the sequel that in two of situations the converse manifestly fails, namely that the classes of semi-Bassian and absolutely generalized Bassian groups are independent each other, thus motivating their study in-depth. Unfortunately, the fully generalized Bassian groups are precisely the Bassian ones.

\medskip

On the other vein, in \cite{K} was introduced two important classes of groups like this: a group $G$ is said to be {\it co-Bassian} or, respectively, {\it generalized co-Bassian} if, for all subgroups $H\leq G$, whenever there is an injection $\phi: G\to G/H$ either $\phi(G)=G/H$ or, respectively, the group $\phi(G)$ is a direct summand of the factor-group $G/H$. Fortunately, these groups were completely described in terms of invariants.

\medskip

More generally, in \cite{CDK1} was considered and examined the case of groups $G$ where $\phi(G)$ is essential in a direct summand of the quotient-group $G/H$, calling them {\it semi-generalized co-Bassian} and succeeded to characterize all such $p$-groups and groups of finite torsion-free rank.

\medskip

Nevertheless, it is rather interesting to know what happens when the image $\phi(G)$ of the group $G$ is either of finite index in $G/H$, or is an essential subgroup of $G/H$, or is a fully invariant direct summand of $G/H$, or is an absolute direct summand of $G/H$, respectively.

\medskip

Precisely, we state the following four new notions.

\begin{definition}\label{finitelyco} A group $G$ is called {\it finitely co-Bassian} provided that, for any subgroup $H$ of $G$, whenever there is an injection $\phi: G\to G/H$, then $\phi(G)$ has finite index in $G/H$.
\end{definition}

\begin{definition}\label{semico} A group $G$ is called {\it semi co-Bassian} provided that, for any subgroup $H$ of $G$, whenever there is an injection $\phi: G\to G/H$, then $\phi(G)$ is essential in $G/H$.
\end{definition}

We, however, will show in the sequel that these two notions curiously do coincide and will classify them accordingly.

\begin{definition}\label{fullyco} A group $G$ is called {\it fully generalized co-Bassian} provided that, for any subgroup $H$ of $G$, whenever there is an injection $\phi: G\to G/H$, then $\phi(G)$ is a fully invariant direct summand of $G/H$.
\end{definition}

\begin{definition}\label{absolutelyco} A group $G$ is called {\it absolutely generalized co-Bassian} provided that, for any subgroup $H$ of $G$, whenever there is an injection $\phi: G\to G/H$, then $\phi(G)$ is an absolute direct summand of $G/H$.
\end{definition}

And so, our basic motivation to explore significantly these groups is that the class of generalized Bassian groups is far from being completely characterized (cf. \cite{CDG1}, \cite{CDG2} and \cite{DK}) as well as that the class of generalized co-Bassian groups is only numerically characterized (see \cite{K}), and thus the given above specifications of the property "direct summand" in the corresponding definitions are worthy of consideration and investigation.

\medskip

That is why, our further work is subsequently organized in the following manner: in the next two sections, we state and prove our major results, which motivate us to write this research. The first one contains four subsections pertaining to generalizations of the Bassian property in which the main achievements are Theorems~\ref{major1}, \ref{major2}, \ref{major3} and \ref{major4}, respectively, as well as the second one also contains three subsections pertaining to generalizations of the co-Bassian property in which the chief achievements are Theorems~\ref{major5}, \ref{major6} and \ref{major7}, respectively.

\section{Generalizing the Bassian Property}

For the convenience of the reader, we include the following.

\begin{lemma}\cite [Lemma~2.4]{K}\label{flag}
Suppose $G$ has finite (torsion-free) rank, $N$ is a subgroup of $G$, $\pi:G\to \overline G:=G/N$ is the usual epimorphism and $\phi: G\to \overline G$ is an injection. Then, $N\subseteq T$, so that $\overline T=T/N$ is the torsion subgroup of $\overline G$. In addition, $\pi$ induces an isomorphism $\pi':G/T\cong \overline G/\overline T$ and $\phi$ induces an embedding $\phi':G/T\to \overline G/\overline T$. Finally, if $G/T$ is divisible, then $\phi'$ is an isomorphism.
\end{lemma}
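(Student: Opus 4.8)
The plan is to derive everything from two elementary observations: torsion-free rank is additive along short exact sequences (when the ambient group has finite torsion-free rank), and group homomorphisms carry torsion elements to torsion elements.

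First I would prove $N\subseteq T$. Applying additivity of torsion-free rank to the exact sequence $0\to N\to G\to\overline G\to 0$ shows that the torsion-free rank of $G$ equals the sum of those of $N$ and $\overline G$; the finiteness hypothesis on the torsion-free rank of $G$ is precisely what makes this comparison meaningful. On the other hand, the injection $\phi\colon G\to\overline G$ forces the torsion-free rank of $G$ to be at most that of $\overline G$. Combining these, the torsion-free rank of $N$ must be $0$, so $N$ is torsion and $N\subseteq T$. Consequently $T/N$ is a genuine subgroup of $\overline G=G/N$; it is torsion as a homomorphic image of $T$, and $\overline G/(T/N)\cong G/T$ is torsion-free, so $T/N$ is exactly the torsion subgroup $\overline T$ of $\overline G$. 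The isomorphism $\pi'\colon G/T\cong \overline G/\overline T$ is then just the third isomorphism theorem applied to the chain $N\subseteq T\subseteq G$.

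Next I would treat $\phi'$. Since $\phi$ sends torsion to torsion, $\phi(T)\subseteq\overline T$, so $\phi$ descends to a homomorphism $\phi'\colon G/T\to\overline G/\overline T$. For injectivity, suppose $\phi(x)\in\overline T$ and pick $n\geq 1$ with $n\phi(x)=0$; then $\phi(nx)=0$, and injectivity of $\phi$ forces $nx=0$, i.e. $x\in T$. Hence $\phi'$ is an embedding. Finally, if $G/T$ is divisible, then, being torsion-free divisible of finite rank, it is a finite-dimensional $\Q$-vector space, and via $\pi'$ so is $\overline G/\overline T$, of the same dimension. Any group homomorphism between $\Q$-vector spaces is automatically $\Q$-linear, so $\phi'$ is an injective linear endomorphism of a finite-dimensional space, hence surjective, hence an isomorphism.

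The argument presents no genuine obstacle; the only points requiring a little care are that the injectivity of $\phi'$ really uses the injectivity of $\phi$ (and not merely that of $\pi$), and that the finiteness hypothesis is invoked exactly to justify the additivity of torsion-free rank on the short exact sequence in the first step.
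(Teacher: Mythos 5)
Your proof is correct, and it reaches the conclusion by a slightly different route than the paper. You establish $N\subseteq T$ first, by applying additivity of torsion-free rank to the exact sequence $0\to N\to G\to \overline G\to 0$ and playing that off against the inequality $\mathrm{rk}(G)\le \mathrm{rk}(\overline G)$ forced by the injection $\phi$; once $N$ is known to be torsion, the identification $\overline T=T/N$, the isomorphism $\pi'$ (third isomorphism theorem) and the embedding $\phi'$ all follow formally. The paper inverts this order: it first passes to the torsion-free quotients $G/T$ and $\overline G/S$, where $S$ is the a priori unknown torsion subgroup of $\overline G$, observes that $\phi$ and $\pi$ induce an injection and a surjection between two torsion-free groups of finite rank, concludes that the surjection $\pi'$ must therefore be an isomorphism, and only then deduces $N\subseteq T$ from $\pi'([N+T]/T)=\{0\}$. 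The underlying mechanism --- finite torsion-free rank together with the existence of the injection $\phi$ forcing a rank equality --- is the same in both arguments; your version makes the role of the finiteness hypothesis slightly more transparent (it is exactly what allows cancellation in the rank equation), while the paper's version produces the maps $\pi'$ and $\phi'$ en route rather than after the fact. Your handling of the divisible case, viewing $G/T$ and $\overline G/\overline T$ as finite-dimensional $\mathbb{Q}$-vector spaces so that the injective $\mathbb{Q}$-linear map $\phi'$ is automatically surjective, is a clean way to spell out what the paper dismisses as an easy verification.
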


\begin{proof} Let $S$ be the torsion subgroup of $\overline G$; clearly, $\phi(T)=\phi(G)\cap S$ and $\pi(T)\subseteq S$. So, $\phi$ and $\pi$ induce injective and surjective homomorphisms $$\phi', \pi':G/T\to \overline G/S,$$ respectively.

Since both $G/T$ and $\overline G/S$ are torsion-free of finite rank, it easily follows that $$\pi':G/T\to \overline G/S$$ is an isomorphism. And since $$\pi'([N+T]/T)=[\pi(N)+S]/S=\{0\},$$ we have $[N+T]/T=\{0\}$, i.e., $N\subseteq T$, so that $S=\overline T$. Finally, if $G/T$ is divisible, then an easy verification shows that $\phi'$ is an isomorphism, as required.
\end{proof}

We will make frequent use of the following construction.

\begin{lemma}\label{basic} Suppose $G$ is a group that has subgroups $N\leq M$ such that $M/N$ is divisible and there is an injection $M\to M/N$. Then, there is a splitting $G/N\cong (M/N)\oplus (G/M)$ and an injective homomorphism $\phi:G\to G/N$.
\end{lemma}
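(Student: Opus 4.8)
The plan is to build the splitting of $G/N$ first and then manufacture the required injection $\phi\colon G\to G/N$ as the composite of the quotient map $G\to G/M$ with a chosen embedding of $G/M$ into a suitable direct summand of $G/N$. Since $M/N$ is divisible, it is a direct summand of every group containing it; in particular, $M/N$ is a direct summand of $G/N$, say $G/N=(M/N)\oplus K$ for some subgroup $K\le G/N$. Because $K\cong (G/N)/(M/N)\cong G/M$, we obtain the stated splitting $G/N\cong (M/N)\oplus(G/M)$; fix an isomorphism $\theta\colon G/M\to K$ realizing this.

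Next I would use the hypothesized injection $M\to M/N$, call it $\psi\colon M\to M/N$. The idea is to extend $\psi$ to all of $G$ by sending the ``$G/M$-part'' into the complementary summand $K$. Concretely, let $\pi\colon G\to G/N$ be the canonical epimorphism and let $q\colon G\to G/M$ be the canonical epimorphism; composing $q$ with $\theta$ gives a homomorphism $G\to K\le G/N$. The problem is that $\psi$ is only defined on $M$, not on $G$, so one cannot simply add the two maps. The remedy is to push $\psi$ through the inclusion $M/N\hookrightarrow G/N$ only after precomposing with something defined on all of $G$. Since $M/N$ is divisible hence injective, the homomorphism $\psi\colon M\to M/N\subseteq G/N$ extends to a homomorphism $\widetilde\psi\colon G\to M/N\subseteq G/N$. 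Now define
\[
\phi\colon G\to G/N,\qquad \phi(g)=\widetilde\psi(g)+\theta(q(g)).
\]
This is a homomorphism into $(M/N)\oplus K=G/N$.

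It remains to check that $\phi$ is injective. Suppose $\phi(g)=0$. Looking at the two coordinates of the direct sum $G/N=(M/N)\oplus K$, we get $\widetilde\psi(g)=0$ and $\theta(q(g))=0$; since $\theta$ is an isomorphism, the latter gives $q(g)=0$, i.e.\ $g\in M$. But on $M$ the map $\widetilde\psi$ restricts to $\psi$, so $\widetilde\psi(g)=\psi(g)=0$ forces $g=0$ because $\psi$ is injective. Hence $\phi$ is an injective homomorphism $G\to G/N$, as required.

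The one genuinely delicate point is making sure the extension $\widetilde\psi$ is chosen so that its restriction to $M$ is exactly $\psi$ (not merely some extension that might kill part of $M$); this is guaranteed precisely by the injectivity of the divisible group $M/N$, so no additional hypothesis is needed. Everything else is the routine bookkeeping of splitting off a divisible (hence injective) summand.
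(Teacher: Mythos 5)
Your proof is correct and follows essentially the same route as the paper: split off the divisible summand $M/N$, extend the given injection $M\to M/N$ to all of $G$ by injectivity of divisible groups, and pair that extension with the quotient map $G\to G/M$ to get $\phi$, checking injectivity coordinatewise exactly as you do. Nothing to add.
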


\begin{proof} Since $M/N$ is divisible, the splitting $$G/N\cong (M/N)\oplus (G/M)$$ is immediate. Note that the injection $M\to M/N$ will extend to a homomorphism $\gamma:G\to M/N$. For each $x\in G$, defining $\phi(x)=(\gamma(x), x+M)$ can readily be seen to be injective.
\end{proof}

Here we completely describe the afore-defined three kinds of generalized Bassian groups as follows.

\subsection{Finitely Bassian groups}

We begin this section with the following quite surprising assertion by showing that, actually, there is nothing new in Definition~\ref{semi} concerning the reduced case. In other words, the classes of reduced semi-Bassian groups and reduced Bassian groups curiously do coincide.

However, the case of divisible groups is also of some interest being non-trivial. For example, consider $G={\mathbb Z}(2^\infty)\oplus {\mathbb Z}(3^\infty)$. If $N\leq G$ and $\phi:G\to G/N$ is injective, it plainly follows that $N$ is finite and even, in fact, cyclic. So, this $G$ is finitely Bassian, as expected.

\medskip

Concretely, we offer the following.

\begin{theorem}\label{major1} The group $G$ is finitely Bassian if, and only if, $G=A\oplus C$, where $A$ is a divisible hull of a finite group and $C$ is a Bassian group.
\end{theorem}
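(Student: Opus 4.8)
The plan is to prove both directions, starting with the easier sufficiency. For $(\Leftarrow)$, suppose $G = A \oplus C$ with $A$ the divisible hull of a finite group and $C$ Bassian. I would first note that $A$ has finite torsion-free rank and, for each prime $p$, its $p$-torsion $T_p(A)$ is a finite direct sum of copies of $\mathbb{Z}(p^\infty)$ (with almost all primes contributing nothing), while the torsion-free part of $A$ is a finite-dimensional $\mathbb{Q}$-vector space. Given any $N \leq G$ and an injection $\phi: G \to G/N$, I would apply Lemma~\ref{flag} (note $G$ has finite torsion-free rank since both $A$ and $C$ do) to conclude $N \subseteq T(G)$; then I would argue prime-by-prime that $N_p := N \cap T_p(G)$ must be finite. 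The key leverage here is that $C$ is Bassian, so $T_p(C)$ is finite, which forces $T_p(G)$ to be "small modulo $A$"; combined with the divisibility of $T_p(A)$ and a rank/ulm-invariant count, an injection $G \to G/N$ cannot exist unless $N_p$ is finite. Summing over the finitely many relevant primes gives $N$ finite.

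For the harder direction $(\Rightarrow)$, suppose $G$ is finitely Bassian. The first step is to extract the divisible part: write $G = D \oplus R$ with $D$ divisible and $R$ reduced. I would show $D$ must be the divisible hull of a finite group, i.e., $D$ has finite torsion-free rank and each $p$-component $D_p$ is a finite direct sum of copies of $\mathbb{Z}(p^\infty)$ with only finitely many $p$ involved. This is where Lemma~\ref{basic} does the work: if $D$ had infinite rank of some kind — say infinitely many primes $p$ with $D_p \neq 0$, or some $D_p$ containing infinitely many copies of $\mathbb{Z}(p^\infty)$, or infinite torsion-free rank — then one can find subgroups $N \leq M \leq G$ with $M/N$ divisible, $M/N \cong D$ (a suitable "lower slice" of the relevant divisible component), and an injection $M \to M/N$; Lemma~\ref{basic} then produces an injection $G \to G/N$ with $N$ infinite, contradicting that $G$ is finitely Bassian. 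So $D = A$ is the divisible hull of a finite group.

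The second step of $(\Rightarrow)$ is to show the reduced complement $R$ is Bassian. Here I would use that $R$ is a direct summand of $G$ and that the finitely Bassian property should "restrict" appropriately: if $N \leq R$ and there is an injection $R \to R/N$, then extending by the identity on $A$ gives an injection $G = A \oplus R \to A \oplus (R/N) \cong G/N$, so $N$ is finite; thus $R$ is \emph{finitely} Bassian, and since $R$ is reduced I would invoke the earlier-promised fact (the surprising assertion stated just before Theorem~\ref{major1}) that reduced finitely Bassian groups are exactly reduced Bassian groups, concluding $R = C$ is Bassian.

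The main obstacle I anticipate is the careful bookkeeping in the $(\Rightarrow)$ direction when showing $D$ is the divisible hull of a \emph{finite} group: one must handle three separate ways $D$ could be "too big" (infinitely many torsion primes, a $p$-component of infinite rank, infinite torsion-free rank) and, in each case, construct the pair $N \leq M$ so that $M/N$ is divisible and genuinely injects $M \to M/N$ while keeping $M/N$ isomorphic to a divisible group of the forbidden type — making sure $N$ comes out infinite. The cleanest route is probably to reduce each case to a model computation inside $\mathbb{Z}(p^\infty)^{(\omega)}$ or $\mathbb{Q}^{(\omega)}$ or $\bigoplus_p \mathbb{Z}(p^\infty)$, where the shift-type injection $M \to M/N$ with $N$ infinite is transparent, and then transport it into $G$ via the splitting $G/N \cong (M/N) \oplus (G/M)$ supplied by Lemma~\ref{basic}. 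A secondary point requiring care is the precise statement and use of the "reduced finitely Bassian $=$ reduced Bassian" result, which the excerpt defers but which is essential to identifying $C$ as Bassian rather than merely finitely Bassian.
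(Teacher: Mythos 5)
Your overall architecture (split off the divisible part, handle the reduced complement, use Lemma~\ref{basic} to manufacture injections $G\to G/N$ with $N$ infinite) is the right one, but there are two genuine gaps. First, you decompose $G=D\oplus R$ with $D$ the \emph{full} divisible part and claim $D$ must be the divisible hull of a finite group. That conclusion is false as stated: $G=\mathbb{Q}$ is Bassian, hence finitely Bassian, and its divisible part is $\mathbb{Q}$, which is not the divisible hull of any finite group. Your three ``too big'' cases only exclude \emph{infinite} torsion-free rank of $D$, so what you can actually conclude is $D=A\oplus \mathbb{Q}^{(n)}$ with $A$ a divisible hull of a finite group and $n<\omega$; the $\mathbb{Q}^{(n)}$ must then be absorbed into $C$ (which is harmless, since adjoining finite torsion-free rank preserves Bassian-ness, but you never say this). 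The paper sidesteps the issue by taking $A$ to be the maximal divisible subgroup of the \emph{torsion} subgroup $T$, letting the complement $C$ be possibly non-reduced.

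Second, and more seriously, your identification of the complement as Bassian is circular. You correctly show that $R$ is finitely Bassian (extending an injection $R\to R/N$ by the identity on $A$), but you then invoke ``reduced finitely Bassian $=$ reduced Bassian'' as an available prior fact. In the paper that statement is only an informal announcement whose proof \emph{is} Theorem~\ref{major1}; nothing preceding the theorem establishes it, so you are assuming the hard half of the result. To close this you must argue directly that the complement has finite $T_p$ for every $p$ and finite torsion-free rank: (i) if some $T_p$ of the complement were unbounded, a countable unbounded reduced subgroup $M$ has a basic subgroup $N$ with $M/N$ divisible of infinite rank, and Lemma~\ref{basic} yields an injection $G\to G/N$ with $N$ infinite; (ii) if $T_p$ were bounded but infinite, a summand $\oplus_{i<\omega}\mathbb{Z}(p^k)$ and $N=\oplus_{i<\omega}\mathbb{Z}(p^k)_{2i}$ give $G\cong G/N$ with $N$ infinite; (iii) if the torsion-free rank were infinite, a free subgroup $F$ of countable rank admits $N\leq F$ with $F/N$ divisible of countable rank, again contradicting finitely Bassian via Lemma~\ref{basic}. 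A smaller point: in the sufficiency direction, ``summing over the finitely many relevant primes'' needs justification --- you must show $N_p=\{0\}$ (not merely finite) for every $p$ with $A_p=\{0\}$, which follows because for such $p$ the component $T_p$ is finite, hence Bassian.
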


\begin{proof} Suppose first that $G=A\oplus C$ is as above, $N$ is a subgroup of $G$ and $$\phi:G\to G/N=: \overline G$$ is an injection. So, by Lemma~\ref{flag}, it must be that $N\leq T$. If $\pi:G\to \overline G$ is the canonical epimorphism, then for every prime $p$, the maps $\pi, \phi$ induce, respectively, surjective and injective homomorphisms $T_p\to \overline T_p$. This automatically implies that $T_p$ and $\overline T_p$ have the same $p$-rank. Now, if $N_p$ was infinite, we could conclude that it is not reduced, which would imply that $\overline T_p\cong T_p/N_p$ would have $p$-rank strictly less than the $p$-rank of $T_p$. Since this cannot happen, each $N_p$ is finite.

For any prime $p$ such that $A_p=\{0\}$, $T_p$ is finite, so that it is Bassian. Therefore, for such $p$, $N_p=\{0\}$. And since this is true for all but finitely many primes, we can conclude that $N$ is finite.

\medskip

Conversely, suppose that $G$ is finitely Bassian. Let $A$ be the maximal divisible subgroup of $T$, so that $G=A\oplus C$ for some subgroup $C$. If $A$ decomposed into an infinite number of non-zero terms, it would follow that there was an infinite subgroup $N\leq A$ such that $A\cong A/N$ (for example, $N$ could be the socle of $A$). This would immediately imply an isomorphism $$\phi:G=A\oplus C\to (A/N)\oplus C,$$ so that $G$ is not finitely Bassian.  Consequently, $A$ must be a divisible hull of a finite subgroup, for instance, its socle.

Let $S$ be the torsion subgroup of $C$. If $p$ is a prime, we want to show that $S_p$ is finite. We first claim that it is bounded: if this failed, then $S_p$ would have a countable unbounded reduced subgroup $M$. This $M$, in turn, will have a basic subgroup $N$ such that $M/N$ has countably infinite rank. Since there is clearly an injection $M\to M/N$, thanks to Lemma~\ref{basic}, there is an injection $$\phi:G\to G/N\cong (M/N)\oplus (G/M).$$ And since $N$ is infinite, this contradicts that $G$ is finitely Bassian.

Therefore, $S_p$ is bounded. If it were infinite, then for some integer $k$ there would be a decomposition $$G=(\oplus_{i<\omega} \mathbb Z(p^k)_i)\oplus G',$$ where each $\mathbb Z(p^k)_i$ is a copy of $\mathbb Z(p^k)$. Clearly, if $N=\oplus_{i<\omega} \mathbb Z(p^k)_{2i}$, then $N$ is infinite and $G/N\cong G$, contradicting that $G$ is finitely Bassian.

Thus, each $S_p$ is finite, so we must show that $G$ has finite rank. If this failed, we could find a free subgroup $F\leq G$ of countably infinite rank, and a subgroup $N\leq F$ such that $F/N$ is divisible of countably infinite rank. Since there is clearly an injection $F\to F/N$, using Lemma~\ref{basic}, there is an injection $$\phi:G\to (F/N)\oplus (G/F).$$ And since $N$ is infinite, this contradicts that $G$ is finitely Bassian, as needed.
\end{proof}

\subsection{Semi Bassian groups}

We characterize the semi Bassian groups in the next result.

\begin{theorem}\label{major2} The group $G$ is semi Bassian if, and only if, either $G\cong \mathbb{Z}(p^{\infty})$
or $G$ is a Bassian group.
\end{theorem}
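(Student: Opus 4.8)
The plan is to prove both directions, the forward (necessity) direction carrying essentially all of the work.

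\emph{Sufficiency.} If $G$ is Bassian, then it is trivially semi Bassian, since any injection $G\to G/N$ already forces $N=\{0\}$. If $G\cong\mathbb{Z}(p^\infty)$, its subgroups form the chain $\{0\}\subsetneq\mathbb{Z}(p)\subsetneq\mathbb{Z}(p^2)\subsetneq\cdots$, so every nonzero subgroup contains the socle and is hence essential in $G$; thus the conclusion of the semi-Bassian condition holds for \emph{every} subgroup $N$ (independently of whether an injection $G\to G/N$ exists), and $G$ is semi Bassian.

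\emph{Necessity.} Assume $G$ is semi Bassian but not Bassian, aiming to conclude $G\cong\mathbb{Z}(p^\infty)$. First I would verify that $G$ has finite torsion-free rank. If it did not, then $G/T$ would have infinite rank, so picking elements of $G$ whose images in $G/T$ are $\mathbb{Z}$-independent produces a free subgroup $F\le G$ of rank $\aln$ (any relation among those elements descends to the torsion-free group $G/T$). Fixing a surjection of $F$ onto a divisible group of rank $\aln$, say $\mathbb{Q}^{(\aln)}$, with kernel $N$, we get that $F/N$ is divisible with an obvious embedding $F\hookrightarrow F/N$, so Lemma \ref{basic} supplies an injection $G\to G/N\cong(F/N)\oplus(G/F)$. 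Since $F/N$ is torsion-free and nonzero, any $x\in F\setminus N$ has $\langle x\rangle\cap N=\{0\}$ with $\langle x\rangle\ne\{0\}$, so $N$ is a nonzero, non-essential subgroup admitting an injection $G\to G/N$ --- contradicting that $G$ is semi Bassian. Hence $G$ has finite torsion-free rank, and then, since $G$ is not Bassian, the Main Theorem of \cite{CDG1} forces $T_p$ to be infinite for some prime $p$.

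Fix such a $p$; by Lemma \ref{flag}, every subgroup $N$ admitting an injection $G\to G/N$ lies in $T$. Suppose first $T_p$ is reduced. Then its basic subgroup is infinite (a finite basic subgroup, being bounded and pure, would be a direct summand of $T_p$ with divisible complement, forcing $T_p$ to be finite), so $T_p$ contains $\bigoplus_{n<\omega}\mathbb{Z}(p^{k_n})$ with the $k_n$ either all equal to a fixed $k$ or strictly increasing. In the first situation this direct sum may be taken to be a direct summand of $G$ (it is bounded and pure in $G$); letting $N$ be its even-indexed part gives $G/N\cong G$, while $N$ misses an odd-indexed summand, so $N$ is a nonzero, non-essential subgroup with an injection $G\to G/N$. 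In the second situation a standard construction gives a surjection of $M:=\bigoplus_{n<\omega}\mathbb{Z}(p^{k_n})$ onto $\mathbb{Z}(p^\infty)^{(\aln)}$, with kernel $N$, together with an embedding $M\hookrightarrow M/N$; Lemma \ref{basic} then yields an injection $G\to G/N$, and since the surjection can be arranged to be nonzero on $M[p]$, some element of order $p$ lies outside $N$, so $N$ is again nonzero and non-essential. Either way we contradict that $G$ is semi Bassian, so this case is impossible. Suppose instead $T_p$ is not reduced. Then $G$ has a direct summand isomorphic to $\mathbb{Z}(p^\infty)$, say $G=\mathbb{Z}(p^\infty)\oplus G'$. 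Taking $N$ to be the socle of this summand, we get $G/N\cong(\mathbb{Z}(p^\infty)/\mathbb{Z}(p))\oplus G'\cong G$, so there is an injection $G\to G/N$ with $N\ne\{0\}$; hence $N$ must be essential in $G$. But $G'\cap N=\{0\}$, so $G'=\{0\}$ and $G\cong\mathbb{Z}(p^\infty)$, as desired.

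The step I expect to be the main obstacle is the ``strictly increasing'' subcase above: one has to produce a subgroup $N\le G$ that is at once infinite (hence nonzero), realized as the kernel of a surjection of a direct sum of cyclic $p$-groups onto a divisible group of countably infinite rank admitting an embedding $M\hookrightarrow M/N$ (so that Lemma \ref{basic} applies --- this part parallels constructions already used in the proof of Theorem \ref{major1}), and genuinely non-essential in $G$. The last requirement is the delicate one, and it follows from the elementary observation that a surjection onto a nonzero divisible $p$-group cannot kill the entire socle, so a suitable element of order $p$ in $M$ lying outside $N$ is always available.
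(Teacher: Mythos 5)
Your proof is correct and follows essentially the same route as the paper's: the non-reduced case is settled by the socle/essentiality argument forcing $G\cong\mathbb{Z}(p^\infty)$, and the remaining cases are ruled out by producing a nonzero, non-essential $N$ admitting an injection $G\to G/N$ via Lemma~\ref{basic} (a free subgroup of infinite rank, or a homogeneous/unbounded direct sum of cyclics inside an infinite $T_p$). The only cosmetic differences are that you organize the case analysis through the Main Theorem of \cite{CDG1} and obtain the unbounded subcase by surjecting $\bigoplus_n\mathbb{Z}(p^{k_n})$ onto $\mathbb{Z}(p^\infty)^{(\aleph_0)}$, whereas the paper quotients a countable unbounded reduced subgroup by a basic subgroup --- the same mechanism.
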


\begin{proof}  Suppose first that $G$ is semi Bassian. If $T$ is not reduced, then $G=G'\oplus Z$, where $Z\cong \mathbb{Z}(p^{\infty})$. Since $G\cong G/Z[p]$, we could conclude that $Z[p]$ is essential in $G$. This clearly implies that $G'=\{0\}$, so that we find $G=Z\cong \mathbb{Z}(p^{\infty})$. So, we may assume $T$ is reduced.

If $p$ is a prime, we want to show that $T_p$ is finite. We first claim that it is bounded: if this failed, then $T_p$ would have a countable unbounded reduced subgroup $M$. This $M$, in turn, will have a basic subgroup $N$ such that $M/N$ has countable rank. Since there is an obvious injection $M\to M/N$, appealing to Lemma~\ref{basic}, there is an injection $$\phi:G\to G/N\cong (M/N)\oplus (G/M).$$ And since $N$ is not essential in $M$, it is apparently not essential in $G$ too; this surely contradicts that $G$ is semi Bassian.

Therefore, $T_p$ is bounded. If it were infinite, then, for some integer $k$, there would be a decomposition $$G=(\oplus_{i<\omega} \mathbb Z(p^k)_i)\oplus G',$$ where each $\mathbb Z(p^k)_i$ is a copy of $\mathbb Z(p^k)$. Evidently, if $N=\oplus_{i<\omega} \mathbb Z(p^k)_{2i}$, then $N$ is clearly not essential in $G$ and $G/N\cong G$, contradicting that $G$ is finitely Bassian.

Thus, each $T_p$ is finite, so to show $G$ is Bassian we must prove that $G$ has finite (torsion-free) rank. However, if this failed, we could find a free subgroup $F\leq G$ of countably infinite rank, and a subgroup $N\leq F$ such that $F/N$ is divisible of countably infinite rank. Since there is plainly an injection $F\to F/N$, using Lemma~\ref{basic}, there is an injection $$\phi:G\to (F/N)\oplus (G/F).$$ And since, again, $N$ is not essential in $M$, it is directly not essential in $G$ as well. This again contradicts that $G$ is semi Bassian, as needed.

Since a quasi-cyclic or a Bassian group is immediately semi-Bassian, the converse is automatic.
\end{proof}

It is worthy of noticing that if we consider the following modified version of Definition~\ref{semi}, there will be nothing new and interesting. In fact, let us call a group $G$ {\it almost Bassian} provided that, for any non-zero subgroup $N$ of $G$, whenever there is an injection $G\to G/N$, then $N$ is essential in $G$.

\medskip

In fact, $G$ will be ``almost Bassian" exactly when it is ``semi Bassian": indeed, if $P(\phi)$ is the statement ``$\phi$ is an injection", $Q(N)$ is the statement ``$N=\{0\}$" and $R(N)$ is the statement ``$N$ is essential in $G$", then the statement ``$G$ is semi Bassian" can be written symbolically as
$$
               \forall (N\leq G)\forall (\phi:G\to G/N)(P(\phi)\Rightarrow (Q(N)\lor R(N))
$$
and ``$G$ is almost Bassian" can be written symbolically as    		
$$
               \forall (N\leq G)\forall (\phi:G\to G/N)((P(\phi)\land \lnot Q(N))\Rightarrow R(N)).
$$
But, it is readily to see that the stuff inside the quantifiers is tautologically equivalent.

\subsection{Fully generalized Bassian groups}

We start in this section with the following quite surprising assertion by demonstrating that, actually, there is nothing new in Definition~\ref{fully}. In other words, the classes of fully generalized Bassian groups and Bassian groups curiously do coincide.

To show that, we first recall that a group $G$ is {\it B+E} if it is isomorphic to the direct sum $B\oplus E$, where $B$ is a Bassian group and $E$ is an elementary group.

We now have the following helpful assertion.

\begin{corollary}(\cite[Corollary 3.1]{DK}) Every generalized Bassian group is {\rm {B+E}}.
\end{corollary}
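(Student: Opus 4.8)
The plan is to build on the two structural facts already known about a generalized Bassian group $G$: that $pT_p$ is finite for every prime $p$ (\cite[Lemma~3.1]{CDG2}) and that $G$ has finite torsion-free rank (\cite{DK}). First I would describe the torsion subgroup precisely. For a fixed prime $p$, finiteness of $pT_p$ forces the divisible part of $T_p$ to vanish (a divisible $p$-group $D$ satisfies $pD=D$, so $pD$ finite means $D=\{0\}$), so $T_p$ is reduced; picking a basic subgroup $B=\bigoplus_{n\geq 1}B_n$ of $T_p$ one has $pB\subseteq pT_p$ finite, so $B_n$ is finite for all $n\geq 2$ and zero for all but finitely many such $n$, while $T_p=pT_p+B$ with $pT_p$ finite shows $T_p/B$ is at once divisible and finite, hence $T_p=B$. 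Thus $T_p=\hat E_p\oplus F_p$ with $\hat E_p:=B_1$ elementary and $F_p:=\bigoplus_{n\geq 2}B_n$ finite, so that $T=E\oplus F$, where $E:=\bigoplus_p\hat E_p$ is elementary and $F:=\bigoplus_p F_p$ is a torsion group all of whose primary components are finite. One also checks that $E$ is pure in $G$: from $pG\cap T_p=pT_p\subseteq F_p$ one gets $\hat E_p\cap pG=\{0\}$, multiplication by a prime $q\neq p$ is bijective on $T_p$, and these assemble to $E\cap nG=nE$ for every $n$.

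The heart of the matter is to extract from $E$ a large elementary \emph{direct summand} of $G$. Each $\hat E_p$ is bounded and pure, hence a direct summand of $G$ (a pure subgroup of bounded exponent splits off), and the same holds for $\bigoplus_{p\in S}\hat E_p$ when $S$ is finite; but an infinite direct sum of bounded $p$-groups over distinct primes need not split off from an arbitrary containing group (for example $\bigoplus_p\Z(p)$ is not cotorsion, hence sits as a non-splitting pure subgroup, with torsion-free quotient, inside some group), so it is precisely here that the generalized Bassian hypothesis must enter and purity alone does not suffice. I would proceed by contradiction: assuming that no subgroup $E'\leq E$ of finite index in each $\hat E_p$ is a direct summand of $G$ --- equivalently, that the retractions $G\to\hat E_p$ afforded by the above splittings cannot be reconciled into a single retraction $G\to E'$, the obstruction being a non-trivial class in $\mathrm{Pext}(G/E,E)$ which, since $G$ has finite torsion-free rank, already restricts non-trivially along some rank-one torsion-free subquotient of $G/E$ --- one reindexes the copies of $\Z(p)$ inside the infinite $\hat E_p$'s and, combining this ``shift'' with the non-split twist as in the constructions underlying Lemma~\ref{basic} and Lemma~\ref{flag}, produces a subgroup $N\leq G$ together with an injection $G\to G/N$ for which $N$ is not a direct summand of $G$. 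This contradicts that $G$ is generalized Bassian. Making this construction rigorous is the main obstacle; everything surrounding it is routine.

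Granting that $G=E'\oplus B$ with $E'$ elementary and of finite index in each $\hat E_p$, it remains to verify that $B$ is Bassian. The torsion-free rank of $B$ equals that of $G$, hence is finite; and for each prime $p$ one has $T_p(G)=E'_p\oplus T_p(B)$ (where $E'_p=E'\cap\hat E_p$), so $T_p(B)\cong(\hat E_p/E'_p)\oplus F_p$ is finite. By the Main Theorem of \cite{CDG1}, $B$ is Bassian, so $G=B\oplus E'$ is {\rm B+E}, as desired. (One may, if one prefers, first split off the maximal divisible subgroup $D$ of $G$; since $T$ is reduced, $D$ is torsion-free of finite rank and hence Bassian, and it is reabsorbed into $B$ at the end because the direct sum of two Bassian groups is again Bassian. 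This reduces the work to the reduced case but is not essential.)
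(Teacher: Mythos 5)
This corollary is not proved in the paper at all --- it is imported verbatim from \cite[Corollary~3.1]{DK} --- so there is no in-paper argument to compare against; I can only judge your attempt on its own terms. The preliminary analysis is correct and well executed: from the finiteness of $pT_p$ you correctly deduce $T_p=\hat E_p\oplus F_p$ with $\hat E_p$ elementary and $F_p$ finite, the purity computation for $E=\oplus_p\hat E_p$ is right (using $p^kG\cap T_p=p^kT_p$, which does hold), and the closing step --- that any complement of a cofinite elementary summand has finite torsion-free rank and finite $T_p$ for every $p$, hence is Bassian by the Main Theorem of \cite{CDG1} --- is routine, as is the remark about reabsorbing the divisible part.

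However, the entire content of the corollary lives in the one step you do not carry out: producing an elementary subgroup $E'\leq E$, cofinite in each $\hat E_p$, that is actually a \emph{direct summand} of $G$. You correctly observe that purity plus $p$-boundedness splits off each $\hat E_p$ individually but not the whole of $E$, and then you say the obstruction is ``a non-trivial class in $\mathrm{Pext}(G/E,E)$'' which one should convert, by ``reindexing'' and a ``non-split twist,'' into a subgroup $N$ with an injection $G\to G/N$ for which $N$ is not a summand --- and you concede that ``making this construction rigorous is the main obstacle.'' That concession is the gap. There is a genuine logical distance to bridge here: the generalized Bassian hypothesis only constrains those subgroups $N$ for which an injection $G\to G/N$ \emph{exists}, whereas the failure of $E'$ to split is an $\Ext$-theoretic obstruction that does not by itself hand you such an $N$; indeed the natural candidates (finite or cyclic subgroups of some $\hat E_p$) \emph{are} summands of $G$, so no contradiction arises from them, and Lemma~\ref{basic} only produces injections $G\to G/N$ when $M/N$ is divisible, which an elementary quotient never is for $N\neq M$. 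Converting the non-splitting of $E$ into a violation of the generalized Bassian property is exactly the work done in \cite{DK}, and your sketch does not specify $N$, the injection, or why $N$ fails to be a summand. As it stands the proposal is an accurate reduction of the problem plus a statement of the hard part, not a proof of it.
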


We are now in a position to establish the promised above equivalence.

\begin{theorem}\label{major3} The group $G$ is fully generalized Bassian if, and only if, $G$ is a Bassian group.
\end{theorem}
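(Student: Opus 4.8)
The plan is to prove both directions. The implication "Bassian $\Rightarrow$ fully generalized Bassian" is immediate: if $G$ is Bassian and $\phi:G\to G/N$ is an injection, then $N=\{0\}$, which is trivially a fully invariant direct summand of $G$. So the content is in the converse.

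For the converse, suppose $G$ is fully generalized Bassian. Since fully generalized Bassian groups are in particular generalized Bassian, the Corollary cited just above (every generalized Bassian group is \textrm{B+E}) applies, so we may write $G\cong B\oplus E$ with $B$ Bassian and $E$ elementary, say $E=\bigoplus_{p}E_p$ with each $E_p$ a direct sum of copies of $\mathbb Z(p)$. The goal is to force $E=\{0\}$, i.e. to show that any nonzero elementary summand produces an injection $G\to G/N$ with $N$ \emph{not} fully invariant (or not a direct summand), contradicting the hypothesis. The natural candidate: if $E_p\neq\{0\}$ for some prime $p$, pick a nonzero $N\leq E_p$; since $E_p$ is a (possibly infinite) direct sum of copies of $\mathbb Z(p)$, one can arrange $E_p\cong E_p/N$ via a suitable shift/relabeling of summands, hence $G=B\oplus E_p\oplus(\text{rest})\cong B\oplus (E_p/N)\oplus(\text{rest})\cong G/N$, giving an injection $G\to G/N$. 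Now choose $N$ so that it is not a fully invariant subgroup of $G$: for instance, if $E_p$ has rank $\geq 2$, take $N=\langle e\rangle$ for a single basis element $e$; the automorphism of $E_p$ swapping $e$ with another basis element (extended by the identity on $B$ and on the other $E_q$) does not send $N$ into itself, so $N$ is not fully invariant, contradiction. If $E_p$ has rank exactly $1$, then $\mathbb Z(p)$ is a summand and $N=\{0\}$ is the only subgroup for which $E_p/N\cong E_p$, so this simple trick does not directly work; here I would instead combine a rank-$1$ summand $\mathbb Z(p)$ with a copy of $\mathbb Z$ (or another torsion piece) inside $B$ or use that $B\oplus\mathbb Z(p)$ must itself be non-Bassian-flavored, exhibiting $N$ of order $p$ sitting diagonally so that it is not fully invariant. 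Since $G$ is generalized Bassian, $pT_p$ is finite, so actually only finitely many primes can contribute and each $E_p$ is finite-dimensional, but this does not by itself kill $E_p$; the fully-invariance obstruction is what does.

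The key steps in order: (1) record the trivial direction; (2) invoke \textrm{B+E} to write $G\cong B\oplus E$; (3) assume toward a contradiction $E\neq\{0\}$, fix a prime $p$ with $E_p\neq\{0\}$; (4) construct an injection $G\to G/N$ for a cleverly chosen nonzero $N\leq E_p$ — using a basis of $E_p$ to realize $E_p\cong E_p/N$, hence $G\cong G/N$; (5) show this $N$ fails to be a fully invariant direct summand by producing an explicit endomorphism (a basis swap, or a map mixing the $\mathbb Z(p)$-summand with another part of $G$) that moves $N$ outside itself; (6) conclude $E=\{0\}$, so $G\cong B$ is Bassian.

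The main obstacle will be step (5) in the rank-one case, when $E_p\cong\mathbb Z(p)$ is a single copy: then $N=\{0\}$ is the only subgroup with $E_p/N\cong E_p$, so the "shift within $E_p$" idea collapses and one must genuinely exploit the ambient group $B$ (or another torsion component) to build a nonzero $N$ with $G\cong G/N$ and $N$ not fully invariant — for example by showing that if $\mathbb Z(p)$ is a summand of $G$ then $G$ has an element of order divisible by $p$ outside that summand, letting us move a generator of $N$ across the decomposition by an endomorphism. If instead one can show a lone $\mathbb Z(p)$ summand is \emph{also} forbidden (so that generalized Bassian already forces $E=\{0\}$ under the fully-invariant strengthening), the argument simplifies considerably; verifying exactly which case holds is the delicate point, and I expect the authors resolve it by a direct endomorphism construction rather than by any structure theorem.
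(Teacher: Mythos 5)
Your skeleton (invoke the B+E corollary, then exhibit a non-fully-invariant $N$ with $G\cong G/N$) is the paper's skeleton, but the execution has a genuine gap because you are aiming at the wrong target. You try to force $E=\{0\}$, which is both unnecessary and false: a finite elementary group such as $\mathbb Z(p)$ is itself Bassian (finite torsion-free rank, finite $T_p$), so a Bassian group may perfectly well carry nonzero elementary summands. The correct target is only that each $E_p$ has \emph{finite} rank, since then $B\oplus E$ still has finite torsion-free rank and finite $p$-torsion for every $p$, hence is Bassian by the Main Theorem of \cite{CDG1}. Once you see this, the ``delicate rank-one case'' you spend most of your effort worrying about simply evaporates --- there is nothing to rule out when $E_p$ is finite, and no ``direct endomorphism construction'' mixing $\mathbb Z(p)$ with $B$ is needed or possible.

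Worse, your step (4) is actually broken in the finite-rank case you try to treat: if $E_p$ has finite rank $\geq 2$ and $N=\langle e\rangle$ is a cyclic summand, then $E_p/N$ has strictly smaller order, so $E_p\not\cong E_p/N$ and no ``shift/relabeling'' produces an injection $G\to G/N$. The shift argument is only valid when $E_p$ has \emph{infinite} rank, and that is exactly the one case the paper handles: there $E_p\cong E_p/N$ for a cyclic summand $N$, so $G\cong G/N$, and the basis swap (your step (5)) shows $N$ is not fully invariant, contradicting the hypothesis. So the fix is small but essential: restrict the contradiction argument to primes $p$ with $E_p$ of infinite rank, conclude every $E_p$ is finite, and then quote the numerical characterization of Bassian groups to finish.
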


\begin{proof} Certainly, by definition, any Bassian group is fully generalized Bassian, so assume $G$ is fully generalized Bassian. It follows that $G$ is generalized Bassian, and hence B+E as stated in the previous statement. Hence, write $G=B\oplus E$ with $B$ Bassian and $E$ elementary.

If we can show, for each $p$, that $E_p$ has finite $p$-rank, then it will follow that $G$ is also Bassian, as required. So, by way of contradiction, suppose $p$ is some prime such that $E_p$ has infinite $p$-rank. It readily follows that $G=G'\oplus E_p$ for some subgroup $G'\leq G$. Let $N$ be a non-zero cyclic direct summand of $E_p$. It must be that $E_p\cong E_p/N$, so that $G\cong G/N$. However, since $N$ is obviously not fully invariant in $G$, we deduce that $G$ is {\it not} fully generalized Bassian, giving our contradiction, as pursued.
\end{proof}

\subsection{Absolutely generalized Bassian groups}

First of all, we are prepared to use the following well-known statement.

\begin{lemma}\cite[\S10, Exersize 9]{F1}\label{Fuchs} The direct summand $B$ of an arbitrary group $A$ is its absolute direct summand if, and only if, $B$ is either a divisible group or the quotient $A/B$ is a torsion group such that the $p$-component of which is equal to zero at multiplying by $p^k$ if $B\setminus pB$ has an element of order $p^k$.
\end{lemma}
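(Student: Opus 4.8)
We are given that $B$ is a direct summand of $A$; write $A=B\oplus C$ and let $\rho\colon A\to C$ be the projection with kernel $B$. The plan is to convert the assertion into a relative-injectivity condition and then settle it one prime at a time. The key observation is that a subgroup $V\leq A$ with $V\cap B=\{0\}$ is automatically the graph $\{\sigma(c)+c:c\in S\}$ of the homomorphism $\sigma\colon S:=\rho(V)\to B$, $v\mapsto v-\rho(v)$; from this, two routine verifications give that $V$ is $B$-high exactly when $\sigma$ admits no extension to a homomorphism from a strictly larger subgroup of $C$, and that $A=B\oplus V$ exactly when $S=C$ (indeed $A=B+V$ iff $\rho(A)\subseteq\rho(V)$). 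Invoking Zorn's Lemma to upgrade ``extends a little'' to ``extends to all of $C$'', one obtains the reformulation: \emph{$B$ is an absolute direct summand of $A$ if and only if every homomorphism from a subgroup of $C$ into $B$ extends to a homomorphism $C\to B$} (which, being isomorphism-invariant, does not depend on the choice of complement).

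Granting the reformulation, I would first settle the divisible/torsion dichotomy. If $B$ is divisible it is injective, so the extension property is automatic. Conversely, if $B$ is an absolute direct summand while $A/B\cong C$ is not torsion, fix $c\in C$ of infinite order: for any prime $p$ and any $b\in B$, the assignment $pc\mapsto b$ is a homomorphism $\langle pc\rangle\to B$, whose extension to $C$ restricts on $\langle c\rangle$ to a map sending $c$ to some $b'$ with $pb'=b$, forcing $b\in pB$; since $b$ is arbitrary, $B=pB$ for every $p$, so $B$ is divisible. Hence we may assume from now on that $A/B$ is torsion and $B$ is not divisible, and it remains to match the extension property with the $p^{k}$-condition.

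Then comes the primary reduction. Since $C$, its subgroups, and the relevant homomorphisms all split along $p$-components --- $\Hom(C,B)\cong\prod_{p}\Hom(C_{p},B_{p})$, and likewise over subgroups --- the extension property for $C$ amounts to its validity for each $(C_{p},B_{p})$; moreover the $p$-height of a $p$-element of $B$ equals its height in $B_{p}$, so ``$B\setminus pB$ has an element of order $p^{k}$'' means exactly ``$B_{p}$ has an element of order $p^{k}$ and height $0$''. So the whole statement reduces to the following $p$-group lemma, which is the heart of the proof: \emph{for $p$-groups $R$ and $C$, every homomorphism from a subgroup of $C$ into $R$ extends to $C$ if and only if $p^{k}C=\{0\}$ whenever $R$ has an element of order $p^{k}$ and height $0$.} Necessity is quick: a height-$0$ element $r\in R$ of order $p^{k}$ together with an element $c\in C$ of order $p^{k+1}$ yields the homomorphism $\langle pc\rangle\to R$, $pc\mapsto r$, which cannot extend to $\langle c\rangle$, as that would put $r$ in $pR$. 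For sufficiency: if $R$ has no height-$0$ element it is divisible and we are done; otherwise let $k_{*}$ be the smallest $k$ for which $R$ has a height-$0$ element of order $p^{k}$, so the hypothesis forces $p^{k_{*}}C=\{0\}$ and, by minimality, every element of $R$ of order dividing $p^{k_{*}-1}$ lies in $pR$. Given $\sigma\colon S\to R$ with $S\leq C$, take (Zorn) a maximal extension $\sigma'\colon S'\to R$; if $S'\neq C$, pick $c\in C\setminus S'$ with $pc\in S'$; then $pc$ has order dividing $p^{k_{*}-1}$, hence so does $\sigma'(pc)$, so $\sigma'(pc)\in pR$; choosing $w\in R$ with $pw=\sigma'(pc)$ (whose order then divides that of $c$), one extends $\sigma'$ to $S'+\langle c\rangle$ via $c\mapsto w$, contradicting maximality. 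So $S'=C$.

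The step I expect to be the genuine obstacle is precisely this last one, together with the realization that the exponent controlling $C$ is the \emph{smallest} order of a height-$0$ element of $R$, not the largest: that is what makes all elements of $R$ of small order divisible by $p$, which is what lets the one-step extension in the Zorn argument go through, and it is also what reproduces Fuchs' phrasing ``$p^{k}(A/B)_{p}=\{0\}$ for every $k$ with $B\setminus pB$ containing an element of order $p^{k}$''. The remaining ingredients --- the graph description of $B$-high subgroups, the stability of heights under primary decomposition, and the standard facts that divisible groups are injective and that a nonzero reduced $p$-group has a height-$0$ element --- are routine.
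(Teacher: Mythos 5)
The paper does not prove this lemma at all: it is quoted verbatim from Fuchs (Infinite Abelian Groups, \S 10, Exercise 9) and used as a black box in Theorems~\ref{major4} and~\ref{major7}. So there is no ``paper proof'' to compare against; what you have supplied is a self-contained solution of Fuchs' exercise, and I checked it line by line: it is correct. The three load-bearing ideas all hold up. (1) The graph description of subgroups $V$ with $V\cap B=\{0\}$ as graphs of homomorphisms $\sigma:\rho(V)\to B$, together with Zorn, correctly converts ``absolute direct summand'' into ``every homomorphism from a subgroup of $C\cong A/B$ into $B$ extends to $C$'' (both directions of that equivalence work: a maximal extension of any $\sigma$ has a $B$-high graph, and a $B$-high $V$ complements $B$ iff its domain is all of $C$). (2) The reduction to $p$-components is legitimate because once $C$ is torsion every subgroup and every homomorphism into $B$ splits along primaries, and $pB\cap B_p=pB_p$ (if $py=x$ with $x$ of order $p^k$ then $y\in B_p$), so ``order $p^k$ in $B\setminus pB$'' really does mean ``order $p^k$ and height $0$ in $B_p$''. (3) The identification of the controlling exponent as the \emph{smallest} order $p^{k_*}$ of a height-zero element is exactly right: it is what guarantees that every element of $R$ of order dividing $p^{k_*-1}$ lies in $pR$, which is precisely what the one-step Zorn extension needs, and it is consistent with Fuchs' phrasing since $p^{k_*}C=\{0\}$ implies $p^kC=\{0\}$ for all larger $k$. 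The divisible/torsion dichotomy at the outset (an element of infinite order in $C$ forces $B=pB$ for all $p$) is also handled correctly. I found no gaps.
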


Now, we are ready to prove here the following assertion.

\begin{theorem}\label{major4} The group $G$ is absolute generalized Bassian if, and only if, exactly one of the following two conditions holds:

(1) $G$ is a torsion group with elementary or finite $T_p$ for each prime $p$.

(2) $G$ is a Bassian non-torsion group.
\end{theorem}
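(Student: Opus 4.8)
The plan is to play the criterion for absolute direct summands in Lemma~\ref{Fuchs} off against the structure theorem for generalized Bassian groups, doing all the real work prime by prime on the torsion part. Throughout I write $T=T(G)$ and $T_p=T_p(G)$.

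\emph{Necessity.} Suppose $G$ is absolutely generalized Bassian; then it is in particular generalized Bassian, so by \cite[Corollary~3.1]{DK} it is B+E, say $G=B\oplus E$ with $B$ Bassian and $E$ elementary, and by \cite{DK} it has finite torsion-free rank. Since the torsion-free rank of $G$ coincides with that of $B$, and since $B$ is Bassian, $T_p(B)$ is finite for every prime $p$. The crux is the following dichotomy: for each prime $p$, either $E_p$ is finite, or $G$ is torsion and $T_p$ is elementary. Indeed, if $E_p$ is infinite, choose a direct summand $N\cong\mathbb{Z}(p)$ of $E_p$; then $E_p/N\cong E_p$, hence $G/N\cong G$, so there is an injection $G\to G/N$, and absolute generalized Bassianness forces $N$ to be an absolute direct summand of $G$. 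As $N$ is not divisible, Lemma~\ref{Fuchs} makes $G/N$ torsion with $p\cdot(G/N)_p=\{0\}$; from $G/N\cong G$ we get that $G$ is torsion, and since $(G/N)_p\cong T_p(B)\oplus(E_p/N)$ we conclude $pT_p(B)=\{0\}$, so $T_p=T_p(B)\oplus E_p$ is elementary. With the dichotomy in hand: if $E_p$ is finite for all $p$, then every $T_p=T_p(B)\oplus E_p$ is finite, so $G$ (having finite torsion-free rank) is Bassian, and then $G$ satisfies (2) if it is non-torsion and (1) if it is torsion; if instead $E_p$ is infinite for some $p$, then $G$ is torsion and each $T_q$ is either finite (when $E_q$ is finite) or elementary (when $E_q$ is infinite), which is (1). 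Conditions (1) and (2) cannot hold simultaneously, so exactly one of them holds.

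\emph{Sufficiency.} If (2) holds, then any injection $G\to G/H$ forces $H=\{0\}$ since $G$ is Bassian, and $\{0\}$ is trivially an absolute direct summand. Now assume (1) holds and let $\phi:G\to G/H$ be an injection. As $G$ is torsion, $H=\bigoplus_p H_p$ with $H_p=H\cap T_p$, and $G/H=\bigoplus_p(T_p/H_p)$, so for each $p$ the map $\phi$ restricts to an injection $T_p\to T_p/H_p$. Comparing orders shows $H_p=\{0\}$ whenever $T_p$ is finite, while if $T_p$ is elementary then $H_p$ is a subspace, hence a direct summand of $T_p$ and in particular elementary. Therefore $H$ is an elementary direct summand of $G$, say $G=H\oplus C$. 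If $H=\{0\}$ we are done; otherwise $G/H$ is torsion, and because $H$ is elementary the set $H\setminus pH=H\setminus\{0\}$ contains an element of order $p$ precisely for the primes $p$ with $H_p\ne\{0\}$, and for each such $p$ the component $T_p$ must be elementary, so $(G/H)_p=T_p/H_p$ is elementary, i.e.\ killed by $p$. Lemma~\ref{Fuchs} now yields that $H$ is an absolute direct summand of $G$, as wanted.

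The step I expect to be the main obstacle is the prime-by-prime bookkeeping: on the sufficiency side, pinning down that every $H$ admitting an injection $G\to G/H$ is automatically elementary (this is exactly what makes Lemma~\ref{Fuchs} applicable), and on the necessity side, squeezing out of a single $\mathbb{Z}(p)$-summand both that $G$ is torsion and that $T_p$ is elementary. Once Lemma~\ref{Fuchs} and the B+E structure theorem are available, the remaining arguments are routine.
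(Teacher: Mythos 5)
Your proof is correct and takes essentially the same approach as the paper: decompose $G$ as B+E via \cite[Corollary~3.1]{DK}, take a rank-one summand $N\cong\mathbb{Z}(p)$ of an infinite elementary $p$-component with $G/N\cong G$, and apply Lemma~\ref{Fuchs} to force $G$ torsion with $T_p$ elementary. The only difference is that you spell out the sufficiency direction (which the paper dismisses as "an almost straightforward observation"), and your prime-by-prime verification there is accurate.
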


\begin{proof} The sufficiency being an almost straightforward observation, we shall attack the necessity. So, assume $G$ is absolute generalized Bassian, but not Bassian; thus, we need to show that $G$ is a torsion group satisfying the required property (1).

To that goal, since $G$ is absolute generalized Bassian, it is generalized Bassian, and hence B+E as already noticed above. Therefore, for any prime $p$, $T_p$ is the direct sum of an elementary and a finite group. Suppose $p$ is any prime such that $T_p$ (and hence $G$) has a maximal elementary direct summand $E_p$ of infinite $p$-rank; since $G$ is not Bassian, such a $p$ exists. If $G=E_p\oplus G'$ and $N$ is a cyclic direct summand of $E_p$, then it is also a direct summand of $G$. Again, there is an isomorphism $E_p\cong E_p/N$ which extends to an isomorphism $G\to G/N$. Therefore, since $G$ is absolutely generalized Bassian, $N$ must be an absolute direct summand of $G$. By usage of Lemma~\ref{Fuchs}, this implies that $G'$ is a torsion group with no $p$-torsion. However, since the above argument works for all such primes $p$, it clearly implies that $G$ is of the form presented in (1), as needed.
\end{proof}

So, we arrived at the interesting fact that the class of absolute generalized Bassian groups does {\it not} coincide with the class of Bassian groups and properly lies between the two classes of Bassian groups and generalized Bassian groups.

\section{Generalizing the co-Bassian Property}

Here we completely describe the afore-defined two kinds of generalized co-Bassian groups as follows.

\subsection{Finitely and semi co-Bassian groups}

In \cite {K}, a $p$-group $T_p$ was said to have {\it generalized finite $p$-rank} if there are elements of $\omega \cup \{\infty\}$, $\alpha_1< \alpha_2<\cdots <\alpha_n$ and cardinals $\rho_i$ ($i=1, 2, \dots, n$) such that
$$T_p\cong \mathbb Z(p^{\alpha_1})^{(\rho_1)}\oplus \cdots \oplus \mathbb Z(p^{\alpha_n})^{(\rho_n)},$$ where $\rho_j$ is finite whenever $j>1$.

We now recollect the following two statements.

\begin{theorem}[\cite{K}, Theorem~2.5]\label{genco-Bassian} The group $G$ is generalized co-Bassian if, and only if, one of the following two conditions holds:

(1) $G$ is divisible;

(2) $G/T$ is divisible of finite rank and, for each prime $p$, $T_p$ has generalized finite $p$-rank.
\end{theorem}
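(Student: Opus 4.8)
The plan is to prove the two implications separately. In both directions I would strip off the maximal divisible subgroup, use Lemma~\ref{flag} to push everything into the torsion subgroup, and then work one prime at a time; all the real difficulty is concentrated in the arithmetic of bounded direct sums of cyclic $p$-groups. \emph{Sufficiency.} If $G$ is divisible then every image $\phi(G)$ is divisible, hence injective, hence a direct summand of any group containing it, so (1) suffices. Assume (2), take $H\le G$ and an injection $\phi\colon G\to\overline G:=G/H$. Since $G$ has finite torsion-free rank, Lemma~\ref{flag} gives $H\subseteq T$, $\overline T=T/H$, and — as $G/T$ is divisible — an isomorphism $G/T\xrightarrow{\ \cong\ }\overline G/\overline T$; hence $\phi(G)+\overline T=\overline G$ and $\phi(G)\cap\overline T=\phi(T)$, and a short modular argument shows that $\phi(G)$ is a direct summand of $\overline G$ provided $\phi(T)$ is a direct summand of $\overline T$. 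Writing $T=\bigoplus_p T_p$, $\overline T=\bigoplus_p\overline T_p$ with $\phi(T_p)\le\overline T_p$, it remains to prove the local statement: if $P$ has generalized finite $p$-rank, $K\le P$ and $\psi\colon P\to P/K$ is injective, then $\psi(P)$ is a direct summand of $P/K$. First I would split off the (finite-rank) divisible part of $P$; because $P/K$ receives a divisible group of the same rank and is a quotient of (divisible)$\,\oplus\,$(bounded), that divisible image is the whole divisible part of $P/K$, and one is left with a bounded, reduced $P$ embedded by $\psi$ into a bounded quotient $\overline P$ of itself — so $\overline P$ is again a direct sum of cyclic groups. Here the decisive structural fact is that $P$ has at most one infinite Ulm--Kaplansky invariant, namely the bottom one; comparing heights then shows $\psi(P)$ is pure in $\overline P$, and being bounded it is a direct summand.

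\emph{Necessity, first part.} I would first note that being generalized co-Bassian passes to direct summands: given a self-injection $A\to A/N$, extend it by the identity on a complement of $A$, apply the hypothesis, and factor out the complement. Write $G=D\oplus R$ with $D$ divisible and $R$ reduced; if $R=\{0\}$ we are in case (1), so assume $R\ne\{0\}$. If some $T_p$ had an unbounded reduced part, or if $G/T$ had infinite torsion-free rank, then — exactly as in the proofs of Theorems~\ref{major1} and \ref{major2} — I would pick a \emph{pure} subgroup $M\le G$ of rank $\aleph_0$ (a suitable summand of a basic subgroup, respectively a free subgroup), a subgroup $N\le M$ with $M/N$ divisible of positive rank and an injection $M\to M/N$, and apply Lemma~\ref{basic} to get an injection $\phi\colon G\to G/N$; since $M$ is pure in $G$ while $M/N$ is divisible, some nonzero element of $\phi(M)$ has finite $p$-height inside $\phi(G)\cong G$ but infinite $p$-height in $G/N$, so $\phi(G)$ is not pure, hence not a direct summand — contradiction. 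Thus every $T_p$ has bounded reduced part and $G/T$ has finite torsion-free rank. If $G/T$ were not $p$-divisible for some $p$, then, splitting the now-bounded $T_p(R)$ off $R$, I would obtain a direct summand $R'$ of $G$ having no $p$-torsion and with $pR'\subsetneq R'$; multiplication by $p$ is then an injection $R'\to R'$ whose image $pR'$ is not a direct summand (a complement would be $p$-torsion-free and killed by $p$, hence trivial) — contradiction. Hence $G/T$ is divisible of finite rank.

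\emph{Necessity, second part.} It remains to show each $T_p$ has generalized finite $p$-rank. If not, then — the reduced part of $T_p$ being bounded — there are ``levels'' $\beta_1<\beta_2\le\infty$ at which the Ulm-type invariant of $T_p$ is, respectively, positive and infinite, so a routine extraction exhibits a direct summand $C_1\oplus C_2^{(\aleph_0)}$ of $G$ with $C_1=\mathbb Z(p^{\beta_1})$ and $C_2=\mathbb Z(p^{\beta_2})$ (read as $\mathbb Z(p^{\infty})$ if $\beta_2=\infty$). The self-injection of $C_1\oplus C_2^{(\aleph_0)}$ that shifts the $C_2$-coordinates up by one index and maps $C_1$ isomorphically onto its unique copy inside the now-vacated first $C_2$ has an image which is a direct summand of $C_1\oplus C_2^{(\aleph_0)}$ only if that copy of $C_1$ is a direct summand of $C_2$; but $C_2$ is indecomposable (or, when $\beta_2=\infty$, divisible with no reduced summand) while $C_1$ is a proper nonzero subgroup, so this is impossible — contradiction. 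Hence (2) holds.

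\emph{The hard part.} The crux is entirely the local analysis of bounded sums of cyclic $p$-groups under a simultaneous quotient-and-embedding: in the sufficiency direction, proving that a copy of a group of generalized finite $p$-rank inside one of its quotients is automatically pure — which is exactly where the hypothesis ``$\rho_j<\aleph_0$ for $j>1$'' is used — and, dually, producing the explicit non-summand self-injection the moment that hypothesis fails. By contrast all the reductions (to the divisible/reduced splitting, to $T$ via Lemma~\ref{flag}, and to a single prime) are routine once Lemmas~\ref{flag} and \ref{basic} are available.
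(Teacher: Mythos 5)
This theorem is quoted from \cite{K} and not reproved in the present paper, so I can only judge your argument on its own merits. Your architecture — split off the divisible part, use Lemma~\ref{flag} to force $H\subseteq T$, reduce modulo $\overline T$ and then work one prime at a time; for necessity, show the property passes to summands and manufacture non-summand images — is the right one, and several pieces (the modular argument reducing summand-ness of $\phi(G)$ to summand-ness of $\phi(T)$ in $\overline T$, the multiplication-by-$p$ trick forcing $G/T$ to be $p$-divisible, the shift construction when two Ulm-type invariants are respectively positive and infinite) are correct. But the sufficiency direction has its entire content concentrated in the local claim that an injective image of a bounded $B\cong\mathbb Z(p^{a})^{(\rho)}\oplus F$ (with $F$ finite, all exponents $>a$) inside a quotient $\overline B=B/K$ is pure, and you dispose of that with the single phrase ``comparing heights then shows $\psi(P)$ is pure.'' That is a restatement, not a proof. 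The claim is true and completable, but it needs two specific observations you do not supply: (i) $p^{a}B=p^{a}F$ is finite and $\psi$ embeds it into its own quotient $p^{a}\overline B=\pi(p^aB)$, forcing $p^{a}\overline B=\psi(p^{a}B)=p^{a}\psi(B)$ and hence purity at all levels $\ge a$; and (ii) for $m<a$, if $\psi(x)=p^{m}\overline y$ then $p^{a-m}x\in p^{a}B$ by (i) and injectivity, so $x=p^{m}z+w$ with $p^{a-m}w=0$, and any element of $B$ of order $\le p^{a-m}$ already lies in $p^{m}B$ because every cyclic summand of $B$ has exponent $\ge a$ --- which is precisely where the hypothesis ``$\rho_j$ finite for $j>1$'' is consumed.

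The more serious gap is in the necessity of ``$G/T$ has finite rank.'' You propose to take a \emph{pure} free subgroup $M\le G$ of rank $\aleph_0$ and then contradict summand-ness by comparing the finite heights of elements of $M$ in $G$ with their infinite heights in the divisible factor $M/N$ of $G/N$. Consider $G=\mathbb Q^{(\omega)}\oplus\mathbb Z(p)$: this $G$ is not divisible and $G/T$ has infinite rank, so the theorem demands that it fail to be generalized co-Bassian; yet $G$ has \emph{no} nonzero pure free subgroup whatsoever (every torsion-free element lies in the divisible part and so has infinite height at every prime), so your construction cannot even begin, and without purity of $M$ the concluding height comparison gives no contradiction. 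A genuinely different construction is required for this configuration --- for instance, choose $N$ inside $\mathbb Q^{(\omega)}$ so that $\mathbb Q^{(\omega)}/N$ acquires a $\mathbb Z(p^{\infty})$ summand and send the reduced summand $\mathbb Z(p)$ into it, producing a non-pure image. (Two smaller slips: you ask only that $M/N$ be divisible ``of positive rank,'' whereas the injection $M\to M/N$ forces countably infinite rank; and the divisible part of $T_p$ need not have finite rank under condition (2), since $\alpha_1=\infty$ with $\rho_1$ infinite is permitted --- that case is trivial, but your parenthetical ``(finite-rank) divisible part'' is not literally correct.)
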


\begin{theorem}[\cite{K}, Theorem~2.6]\label{co-Bassian} The group $G$ is co-Bassian if, and only if, $G/T$ is divisible of finite rank and, for each prime $p$, $T_p$ has finite $p$-rank.
\end{theorem}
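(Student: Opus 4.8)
The plan is to prove the two implications separately: for sufficiency I would run the argument through Lemma~\ref{flag}, and for necessity I would invoke the already-available description of generalized co-Bassian groups in Theorem~\ref{genco-Bassian}, the real content being a direct-summand observation sitting inside the necessity proof.

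\emph{Sufficiency.} Assume $G/T$ is divisible of finite rank and each $T_p$ has finite $p$-rank. Let $H\leq G$ and let $\phi:G\to\overline{G}:=G/H$ be an injection, with $\pi:G\to\overline{G}$ the canonical epimorphism. Since $G$ has finite torsion-free rank, Lemma~\ref{flag} gives $H\subseteq T$, that $\overline{T}:=T/H$ is the torsion subgroup of $\overline{G}$, that $\pi$ induces an isomorphism $G/T\cong\overline{G}/\overline{T}$, and, because $G/T$ is divisible, that $\phi$ induces an \emph{isomorphism} $\phi':G/T\to\overline{G}/\overline{T}$. Hence $\overline{G}=\phi(G)+\overline{T}$, and since $\phi(G)\cap\overline{T}=\phi(T)$ (injectivity of $\phi$ makes $\phi(T)$ precisely the torsion part of $\phi(G)$) we obtain $\overline{G}/\phi(G)\cong\overline{T}/\phi(T)$. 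So it remains to verify $\phi(T_p)=\overline{T}_p:=T_p/(H\cap T_p)$ for every prime $p$. Fix $p$ and split $T_p=D_p\oplus F_p$ with $D_p\cong\mathbb{Z}(p^{\infty})^{d_p}$, $d_p<\infty$, and $F_p$ finite (a reduced $p$-group of finite $p$-rank is finite). The maximal divisible subgroup of $\overline{T}_p$ is isomorphic to $D_p/(D_p\cap H)$, hence of rank at most $d_p$, while $\phi(D_p)$ is a divisible subgroup of it of rank $d_p$; since divisible groups are injective, $\phi(D_p)$ equals this maximal divisible subgroup. Then $\phi$ induces an injection of $F_p\cong T_p/D_p$ into $\overline{T}_p$ modulo its maximal divisible subgroup, the latter being isomorphic to a quotient of $F_p$; comparing the orders of these finite groups, the injection is onto. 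Putting this together, $\overline{T}_p=\phi(T_p)+\phi(D_p)=\phi(T_p)$, so $\phi(T)=\overline{T}$, $\overline{G}=\phi(G)$, and $G$ is co-Bassian.

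\emph{Necessity.} Suppose $G$ is co-Bassian, hence generalized co-Bassian, so by Theorem~\ref{genco-Bassian} either $G$ is divisible, or $G/T$ is divisible of finite rank with every $T_p$ of generalized finite $p$-rank. The key point is that in both cases each $T_p$ is a direct summand of $G$. When $G$ is divisible this is immediate, as the $p$-component of the divisible group $T$ splits off. In the remaining case, generalized finite $p$-rank forces $T_p$ to be a direct sum of a bounded group and a divisible group, hence cotorsion; moreover $T(G/T_p)=T/T_p=\bigoplus_{q\neq p}T_q$ while $(G/T_p)/T(G/T_p)\cong G/T\cong\mathbb{Q}^{(n)}$ for some finite $n$, so the long exact sequence for $\Ext(-,T_p)$ combined with $\Ext(\mathbb{Q},T_p)=0$ and $\Ext(T_q,T_p)=0$ for $q\neq p$ gives $\Ext(G/T_p,T_p)=0$; thus $0\to T_p\to G\to G/T_p\to 0$ splits. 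Granting this, suppose toward a contradiction that some $T_p$ had infinite $p$-rank; in the decomposition $T_p\cong\mathbb{Z}(p^{\alpha_1})^{(\rho_1)}\oplus\cdots$ provided by generalized finite $p$-rank the summand $\mathbb{Z}(p^{\alpha_1})^{(\rho_1)}$ then has an infinite index set, so it admits an injective non-surjective endomorphism (shift along a bijection of the index set with a proper cofinite subset). As this summand is a direct summand of $T_p$, hence of $G$, extending the shift by the identity on a complement in $G$ yields an injective non-surjective endomorphism of $G$, contradicting co-Bassianness (apply the defining property with $H=\{0\}$). So every $T_p$ has finite $p$-rank. Finally, if $G$ is divisible then $G/T\cong\mathbb{Q}^{(n_0)}$ is torsion-free divisible and the identical shift argument applied to this summand forces $n_0<\infty$; in the non-divisible case $G/T$ is divisible of finite rank by hypothesis. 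This yields the two asserted conditions.

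The step I expect to be the main obstacle is the direct-summand assertion used in the necessity part: that each $T_p$ splits off from a non-divisible (generalized) co-Bassian $G$. Its proof hinges on three ingredients: that ``generalized finite $p$-rank'' makes $T_p$ (a bounded group plus a divisible group) cotorsion; that the torsion-free quotient $G/T$ is exactly a finite power of $\mathbb{Q}$, killing $\Ext(G/T,T_p)$; and that $\Ext(T_q,T_p)=0$ for distinct primes, so the torsion layer $\bigoplus_{q\neq p}T_q$ of $G/T_p$ is harmless. The remaining bookkeeping, namely identifying the maximal divisible subgroup of $T_p/(H\cap T_p)$ with a quotient of $D_p$ and its reduced part with a quotient of $F_p$ in the sufficiency argument, and checking that the shift maps are injective and non-surjective, is routine.
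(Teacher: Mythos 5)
This statement is imported verbatim from \cite{K} (Theorem~2.6) and the paper gives no proof of it, so there is nothing internal to compare against; judged on its own, your argument is correct. The sufficiency direction is sound: Lemma~\ref{flag} puts $H$ inside $T$ and makes $\phi':G/T\to\overline{G}/\overline{T}$ an isomorphism, reducing everything to $\phi(T_p)=\overline{T}_p$; your decomposition $T_p=D_p\oplus F_p$ (valid because a reduced $p$-group with finite socle is finite) and the rank/order count on the divisible and reduced layers of $T_p/(H\cap T_p)$ close that gap correctly. The necessity direction legitimately quotes Theorem~\ref{genco-Bassian} (also only cited in this paper, but stated as known), and your ``key obstacle'' --- that $T_p$ splits off $G$ --- is handled correctly via cotorsionness of bounded-plus-divisible groups, $\Ext(\mathbb{Q},T_p)=0$ and $\Ext(T_q,T_p)=0$ for $q\neq p$; the shift endomorphism with $H=\{0\}$ then kills any infinite $\rho_1$ exactly as in the paper's own Lemma~\ref{sum}-style arguments. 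One remark: the $\Ext$ computation is more machinery than needed, since the summand $\mathbb{Z}(p^{\alpha_1})^{(\rho_1)}$ you actually shift is either divisible (if $\alpha_1=\infty$) or bounded and pure in $G$ (being a summand of $T_p$, which is pure in $G$), hence a direct summand of $G$ by Kulikov's theorem without any appeal to cotorsion theory.
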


We pause for a simple but useful observation.

\begin{lemma}\label{sum}
Suppose $N\leq G$ and $\overline G=G/N$ decomposes as $(\oplus_{i<\omega} A_i)\oplus G'$, where each $A_i$ is a copy of a single non-zero group, and $\phi: G\to \overline G$ is an injective homomorphism. Then, $G$ will be neither finitely co-Bassian nor semi co-Bassian.
\end{lemma}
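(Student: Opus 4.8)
The plan is to exhibit, from the given decomposition $\overline G=(\oplus_{i<\omega}A_i)\oplus G'$ together with the injection $\phi:G\to\overline G$, a new subgroup $M\le G$ and a new injection $\psi:G\to G/M$ whose image has infinite index in $G/M$ and is moreover not essential; since $\overline G/\phi(G)$ will then contain a copy of an infinite direct sum of copies of the fixed non-zero group, both the finitely co-Bassian and the semi co-Bassian conditions will be violated simultaneously. The key mechanism is to ``thin out'' the index set: write the even-indexed part $\oplus_{i<\omega}A_{2i}$ and the odd-indexed part $\oplus_{i<\omega}A_{2i+1}$, note that $\oplus_{i<\omega}A_i\cong\oplus_{i<\omega}A_{2i}$ via an index shift, and use this to produce an automorphism-like identification of $\overline G$ with a proper direct summand of itself.

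First I would set $A:=\oplus_{i<\omega}A_i$ and let $\theta:A\to A$ be the shift monomorphism sending the $i$-th copy isomorphically onto the $2i$-th copy, so $\theta(A)=\oplus_{i<\omega}A_{2i}$ and $A/\theta(A)\cong\oplus_{i<\omega}A_{2i+1}\cong A$, an infinite direct sum of copies of the fixed non-zero group; in particular $\theta(A)$ has infinite index in $A$ and, since it misses every $A_{2i+1}$, it is not essential in $A$. Extending $\theta$ by the identity on $G'$ gives an injective endomorphism $\Theta:\overline G\to\overline G$ with $\Theta(\overline G)=\theta(A)\oplus G'$, again of infinite index and not essential in $\overline G$. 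Now compose: $\psi:=\Theta\circ\phi:G\to\overline G=G/N$ is an injective homomorphism with $\psi(G)=\Theta(\phi(G))\subseteq\theta(A)\oplus G'\subsetneq\overline G$.

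The next step is to see that $\psi(G)$ genuinely fails both conclusions. Since $\phi(G)$ is injected into $\overline G$, the quotient $\overline G/\Theta(\phi(G))$ surjects onto $\overline G/\Theta(\overline G)\cong A$, which is an infinite direct sum of copies of a non-zero group, hence infinite; this kills the finitely co-Bassian property. For the semi co-Bassian property I would argue that $\psi(G)$ is not essential in $\overline G$: we need a non-zero subgroup of $\overline G$ meeting $\psi(G)$ trivially. Here the mild subtlety is that $\psi(G)$, though contained in $\theta(A)\oplus G'$, could still a priori hit the ``hidden'' copies $A_{2i+1}$ after we quotient — so the cleanest route is to observe that $\psi(G)\subseteq\theta(A)\oplus G'$ and that any non-zero $A_{2j+1}$ satisfies $A_{2j+1}\cap(\theta(A)\oplus G')=\{0\}$, whence $A_{2j+1}\cap\psi(G)=\{0\}$ with $A_{2j+1}\neq\{0\}$; thus $\psi(G)$ is not essential in $\overline G$, and $G$ is not semi co-Bassian.

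Finally, a bookkeeping remark: the statement is phrased with $N$ and $\overline G=G/N$ fixed in advance, so strictly speaking I should re-read the definitions of finitely/semi co-Bassian, which quantify over \emph{all} subgroups $H$ and injections $\phi:G\to G/H$; it suffices to produce \emph{one} bad pair, and the pair $(N,\psi)$ does the job since $\psi:G\to G/N$ is an injection whose image is neither of finite index nor essential. I expect the only real obstacle to be the essentiality bookkeeping in the previous paragraph — one must be careful that the ``discarded'' summands $A_{2i+1}$ really do intersect $\psi(G)$ trivially rather than merely intersecting $\phi(G)$ trivially — but the containment $\psi(G)\subseteq\Theta(\overline G)=\theta(A)\oplus G'$ makes this transparent. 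Everything else is the routine shift-of-index construction already used in the proofs of Theorems~\ref{major1} and \ref{major2}.
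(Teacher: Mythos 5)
Your proposal is correct and is essentially the paper's own argument: the paper also composes $\phi$ with the endomorphism of $\overline G$ that shifts each $A_i$ onto $A_{2i}$ and fixes $G'$, then notes the resulting image is neither essential nor of finite index. You have merely spelled out the "simple verification" (the surjection onto $\overline G/\Theta(\overline G)\cong\oplus_{i<\omega}A_{2i+1}$ and the trivial intersection with each $A_{2j+1}$) that the paper leaves implicit.
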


\begin{proof}
Let $\gamma:\overline G\to \overline G$ be the identity on $G'$, and map each $A_i$ isomorphically onto $A_{2i}$.
It, thus, follows at once that $\phi':=\gamma\circ \phi$ is injective and hence a simple verification guarantees that $$\phi'(G)\subseteq (\oplus_{i<\omega} A_{2i})\oplus G'$$ is neither essential in $\overline G$, nor is the quotient $\overline G/\phi'(G)$ finite, as wanted.
\end{proof}

This brings us to our characterization of finitely co-Bassian and semi co-Bassian groups, which classes curiously coincide.

\begin{theorem}\label{major5} For a group $G$, the following are equivalent:

(a) $G$ is finitely co-Bassian;

(b) $G$ is semi co-Bassian;

(c) $G$ has finite torsion-free rank and each $T_p$ has finite $p$-rank.
\end{theorem}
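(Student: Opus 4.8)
\textbf{Proof proposal for Theorem~\ref{major5}.}

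The plan is to prove the cycle of implications $(c)\Rightarrow(a)$, $(c)\Rightarrow(b)$, $(a)\Rightarrow(c)$, and $(b)\Rightarrow(c)$, with the bulk of the work concentrated in the two implications that deduce the structural condition $(c)$ from the co-Bassian-type hypotheses. For the directions $(a)\Rightarrow(c)$ and $(b)\Rightarrow(c)$, I would argue contrapositively using Lemma~\ref{sum}: if $(c)$ fails, then either some $T_p$ has infinite $p$-rank or $G$ has infinite torsion-free rank, and in each case I would exhibit a subgroup $H\leq G$ together with an injection $\phi:G\to G/H$ such that $G/H$ decomposes as $(\oplus_{i<\omega}A_i)\oplus G'$ with all $A_i$ isomorphic to a fixed non-zero group. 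Then Lemma~\ref{sum} immediately shows $\phi'(G)$ (for the modified $\phi'$) is neither essential nor of finite index, contradicting both $(a)$ and $(b)$ simultaneously. So the single construction handles both implications.

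The concrete constructions: if some $T_p$ has infinite $p$-rank, I would split off either an unbounded reduced $p$-subgroup or an infinite elementary or an infinite divisible piece, mirroring the technique in the proofs of Theorems~\ref{major1} and \ref{major2}. Specifically, if $T_p$ has an infinite divisible part, take $H=0$ and use an isomorphism $G\cong G$ realizing a shift on the quasi-cyclic summands; if $T_p$ has a bounded infinite elementary-type summand $\oplus_{i<\omega}\mathbb Z(p^k)_i$, take $H=0$ and the shift isomorphism again; if $T_p$ is unbounded reduced, pick a countable unbounded reduced $M\leq T_p$ with basic subgroup $N$ so $M/N$ has countably infinite rank, and apply Lemma~\ref{basic} to get $G/N\cong(M/N)\oplus(G/M)$ with $M/N$ an infinite direct sum of copies of $\mathbb Z(p^m)$'s — one then refines to a single isomorphism type along a subsequence, or simply observes Lemma~\ref{sum} can be applied to the $\mathbb Z(p^m)$-homogeneous layers. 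If instead $G/T$ has infinite rank (equivalently $G$ has infinite torsion-free rank), take a free $F\leq G$ of countably infinite rank and $N\leq F$ with $F/N$ divisible of countably infinite rank; Lemma~\ref{basic} yields an injection $G\to(F/N)\oplus(G/F)$, and $F/N\cong\oplus_{i<\omega}\mathbb Z(p^\infty)$ for some fixed $p$, so Lemma~\ref{sum} applies.

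For the forward directions, assume $(c)$. By Theorem~\ref{co-Bassian}, condition $(c)$ together with $G/T$ divisible would make $G$ co-Bassian outright, so the only subtlety is that $(c)$ as stated does not force $G/T$ divisible. I would argue directly: given $H\leq G$ and an injection $\phi:G\to G/H$, I claim $\phi(G)$ has finite index in $G/H$ (which gives $(a)$), and then observe that finite index trivially implies essential (giving $(b)$, since a subgroup of finite index meets every non-zero subgroup — indeed $G/H$ modulo $\phi(G)$ being finite forces any non-zero subgroup $S$ to have $S\cap\phi(G)\neq 0$ as otherwise $S$ embeds in the finite quotient yet... one must be slightly careful here if $S$ is finite, so essentiality needs the torsion-free-rank bound too). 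To see $\phi(G)$ has finite index: pass to $G/T\to (G/H)/\overline T$, which by the finite-rank hypothesis and Lemma~\ref{flag} is an embedding of torsion-free finite-rank groups inducing finite index on the torsion-free quotient level, and separately control each $p$-component $T_p\to\overline T_p$, where finite $p$-rank forces the cokernel to be finite in each coordinate and zero for almost all $p$. Assembling these gives $[\overline G:\phi(G)]<\infty$.

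The main obstacle I anticipate is the bookkeeping in the $p$-primary reduced-but-unbounded case: Lemma~\ref{sum} requires \emph{one} isomorphism type of summand, whereas a basic subgroup quotient $M/N$ is a direct sum of cyclic groups of \emph{various} orders. The fix is to note that if $M$ is unbounded reduced then its basic subgroup is unbounded, so for infinitely many $m$ there is at least one $\mathbb Z(p^m)$-summand of $M/N$; either infinitely many summands share a common order $p^m$ (then apply Lemma~\ref{sum} to that homogeneous piece, which splits off since it is bounded), or the orders go to infinity and one instead directly builds a shift-type self-injection of $\oplus_m\mathbb Z(p^m)$ sending the $m$-th summand into a higher one, which again lands in a non-essential, infinite-index subgroup. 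A parallel subtlety on the forward side is verifying essentiality rather than just finite index when small finite subgroups are present, but this is absorbed by the finite torsion-free rank together with each $T_p$ having finite $p$-rank, which together make $G$ itself have "finite rank" in the strong sense so that $G/H$ cannot contain a non-zero subgroup avoiding a finite-index subgroup.
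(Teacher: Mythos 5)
Your overall architecture matches the paper's (deduce (a) and (b) from (c), and refute both via Lemma~\ref{sum} when (c) fails), but the forward direction as you sketch it has a genuine gap at the torsion level. For both conclusions you need to control $\overline T_p/\phi(T_p)$, and you only assert that this cokernel is ``finite in each coordinate and zero for almost all $p$'' without saying why. Neither half of that assertion is automatic: a finite-rank $p$-group need not be finite (e.g.\ $\mathbb Z(p^\infty)$), so ``finite rank forces finite cokernel'' requires an argument, and even granting it, ``finite for each $p$'' does not yield finite index in $\overline G$ unless the cokernel vanishes for almost all $p$. Worse, for essentiality (b) finiteness of the cokernel is simply not enough: if $\overline T_p=\phi(T_p)\oplus K$ with $K$ finite and non-zero, then since $\phi(G)\cap\overline T=\phi(T)$ one gets $K\cap\phi(G)=\{0\}$, so $\phi(G)$ is not essential. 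You half-notice this (``one must be slightly careful here if $S$ is finite'') but never resolve it. The resolution, which is the key step of the paper's proof, is that under (c) the torsion subgroup $T$ is itself co-Bassian by Theorem~\ref{co-Bassian} (each $T_p$ has finite rank and $T/T(T)=\{0\}$); since $\pi$ and $\phi$ restrict to a surjection and an injection $T\to\overline T$, co-Bassianness forces $\phi(T)=\overline T$ exactly. With $\phi(G)\supseteq\overline T$ in hand, both essentiality and finiteness of $\overline G/\phi(G)\cong(\overline G/\overline T)/\phi'(G/T)$ follow from the finite-rank torsion-free quotient exactly as you indicate.

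On the converse direction your construction is essentially the paper's, but the ``main obstacle'' you anticipate is not there: when $N$ is a basic subgroup of $M$, the quotient $M/N$ is \emph{divisible}, hence isomorphic to $\bigoplus_{i<\omega}\mathbb Z(p^\infty)$, a direct sum of copies of a single group (you appear to be conflating $M/N$ with $N$, which is the direct sum of cyclics of various orders). So Lemma~\ref{sum} applies directly and your homogeneous-layer/shift-map workaround is unnecessary. Aside from that detour, the case analysis (infinite Ulm invariant or infinite bounded homogeneous summand with $N=\{0\}$; unbounded reduced $T_p$ via a basic subgroup and Lemma~\ref{basic}; infinite torsion-free rank via a free subgroup with divisible quotient) is exactly the paper's.
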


\begin{proof} We will show that if $G$ is of the form specified in (c), then it satisfies both (a) and (b); and conversely, if it is {\it not} of that form, then both (a) and (b) fail.

Suppose first that $G$ satisfies (c), and let $N\leq G$, $\pi: G\to \overline G:= G/N$ be the canonical epimorphism and $\phi:G\to \overline G$ be an injective homomorphism. It follows from Lemma~\ref{flag} that $N\leq T$ and $T/N=:\overline T$ is the torsion subgroup of $\overline G$. In addition, $\phi$ and $\pi$ restrict to injective and surjective homomorphisms $T\to \overline T$, respectively. However, by Theorem~\ref{co-Bassian}, $T$ is co-Bassian. Therefore, $\overline T=\phi(T)$.

Consider now the commutative diagram with short-exact rows, whose columns are induced by the injection $\phi$:
$$
    \begin{matrix}  \{0\}&\to& T&\to & G& \to & G/T & \to & \{0\} \cr
                    &&\mda {\phi_T}&&\mda \phi &&\mda {\phi'}&& \cr
                    \{0\}&\to& \overline T &\to & \overline G & \to & \overline G/\overline T & \to & \{0\} \cr
    \end{matrix}
$$
Again referring to Lemma~\ref{flag}, we detect that $\phi'$ will be injective. So, since $G/T$ and $\overline G/\overline T$ are torsion-free groups of the same finite rank and $\phi_T$ is onto, it elementarily follows that $\phi(G)$ is essential in $\overline G$, proving (b).

In addition, the above commutative diagram implies an isomorphism
$$G/\phi(G)\cong (\overline G/\overline T)/\phi'(G/T).$$
And on the other hand, since $\phi'$ is injective, and with the aid Lemma~\ref{flag} we represent $G/T\cong \overline G/\overline T$, one can conclude that $$\phi'(G/T)\cong G/T\cong \overline G/\overline T.$$ Recall that, whenever $X$ is a torsion-free group of finite rank and $Y$ is a subgroup of $X$ that is isomorphic to $X$, then we can infer that $X/Y$ is finite. Therefore, $(\overline G/\overline T)/\phi'(G/T)$ is finite, so that $G/\phi(G)$ is finite, which proves (a).

\medskip

Conversely, suppose $G$ does not satisfy (c); we want to show that (a) and (b) fail, as well. First, if $p$ is a prime, $n<\omega$ and the $n$th Ulm invariant $(p^n T)[p]/(p^{n+1}T)[p]$ is infinite, then Lemma~\ref{sum} (with $N=\{0\}$, $\pi=1_G=\phi$ and $A_i\cong \mathbb Z(p^{n+1})$) implies that $G$ does not satisfy either (a) or (b). So,
we may assume this does not hold for any prime $p$.

It follows that if $T_p$ does not have finite rank, then it has an countable unbounded reduced subgroup $M$. If $N$ is a basic subgroup of $M$ such that $M/N$ has countably infinite rank, then it is clear that $M$ embeds in $M/N$. So, with Lemma~\ref{basic} at hand, $G$ embeds in $$G/N\cong (M/B)\oplus (G/M),$$ so that Lemma~\ref{sum} guarantees that $G$ does not satisfy (a) or (b), as pursued.

Thus, we may assume that $T_p$ has finite rank for all primes $p$. It will suffice, therefore, to show that it $G$  has finite rank. If this failed, then $G$ would have a free subgroup $F$ of countable rank, and $F$ would have a subgroup $N$ such that $F/N$ is a torsion-free divisible group of countably infinite rank. As it is again clear that $F$ embeds in $F/N$, by Lemma~\ref{basic}, $G$ embeds in $$G/N\cong (F/N)\oplus (G/F),$$ so that by Lemma~\ref{sum}, $G$ does not satisfy (a) or (b), as desired.
\end{proof}

\subsection{Fully generalized co-Bassian groups}

We are now planning to establish the following result.

\begin{theorem}\label{major6} The group $G$ is fully generalized co-Bassian if, and only if, it is co-Bassian, i.e., $G/T$ is divisible of finite rank and, for each prime $p$, $T_p$ has finite $p$-rank.
\end{theorem}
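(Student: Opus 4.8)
The plan is to prove both implications, using the prior characterizations as building blocks. For the easy direction, if $G$ is co-Bassian, then by Theorem~\ref{co-Bassian} it has the stated numerical form, and for any injection $\phi:G\to G/H$ with $\pi:G\to G/H$ the canonical map, Theorem~\ref{co-Bassian} (applied to $G$ itself, which is co-Bassian) forces $\phi(G)=G/H$. But then $\phi(G)$ is all of $G/H$, which is trivially a fully invariant direct summand of itself. Hence $G$ is fully generalized co-Bassian. So the content is entirely in the converse.

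For the converse, suppose $G$ is fully generalized co-Bassian. Since a fully invariant direct summand is in particular a direct summand, $G$ is generalized co-Bassian, so Theorem~\ref{genco-Bassian} applies: either $G$ is divisible, or $G/T$ is divisible of finite rank and each $T_p$ has generalized finite $p$-rank. By Theorem~\ref{co-Bassian}, to conclude $G$ is co-Bassian it suffices to rule out (i) $G$ divisible but not co-Bassian, and (ii) some $T_p$ having generalized finite $p$-rank but \emph{not} finite $p$-rank. In case (ii), by definition of generalized finite $p$-rank there is an index $\alpha=\alpha_1$ with $\rho_1$ infinite, so $T_p$ — and hence $G$ — has a direct summand of the form $\bigoplus_{i<\omega}\mathbb Z(p^{\alpha})_i$ (interpreting $\mathbb Z(p^\infty)$ if $\alpha=\infty$), say $G = \big(\bigoplus_{i<\omega}\mathbb Z(p^\alpha)_i\big)\oplus G'$. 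Now take $H$ so that $G/H$ drops half the summands: concretely, pick $\phi:G\to G/H$ realizing $G/H\cong\big(\bigoplus_{i<\omega}\mathbb Z(p^\alpha)_{2i}\big)\oplus G'$ with $\phi(G)$ the copy of $G$ sitting inside. The point is that $\phi(G)$, being a copy of $\bigoplus_{i<\omega}\mathbb Z(p^\alpha)_i$ in the first coordinate together with $G'$, is a direct summand of $G/H$ but is \emph{not fully invariant}: a shift endomorphism of $\bigoplus_{i<\omega}\mathbb Z(p^\alpha)_{2i}$ inside $G/H$ does not preserve it (more simply, an endomorphism collapsing everything onto one extra generator $\mathbb Z(p^\alpha)_2$ that is not in $\phi(G)$). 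This contradicts full generalized co-Bassianness, so every $T_p$ has finite $p$-rank.

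It remains to handle the divisible case and to pin down $G/T$. If $G$ is divisible with $\mathbb Z(p^\infty)$ of infinite rank in some $T_p$, the same argument as above — chopping a countable direct sum of copies of $\mathbb Z(p^\infty)$ in half via an injection — produces an injection $\phi:G\to G/H$ with $\phi(G)$ a non-fully-invariant direct summand, a contradiction; so a divisible $G$ must already have $T_p$ of finite rank for all $p$, and then $G$ has the form in Theorem~\ref{co-Bassian} (divisible torsion-free part of finite rank, possibly zero, plus finite-rank divisible torsion), hence is co-Bassian. Finally, once each $T_p$ has finite $p$-rank and $G$ is generalized co-Bassian, Theorem~\ref{genco-Bassian} already gives that $G/T$ is divisible of finite rank (in the non-divisible branch) or $G$ is divisible (handled above), so in all cases $G$ satisfies the criterion of Theorem~\ref{co-Bassian} and is co-Bassian.

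The main obstacle I anticipate is producing, in the bad case, an injection $\phi:G\to G/H$ whose image is genuinely a direct summand of $G/H$ \emph{while simultaneously failing to be fully invariant} — one must choose $H$ and $\phi$ carefully so that the ``missing'' summands of $G/H$ provide an endomorphism of $G/H$ moving $\phi(G)$ outside itself; the cleanest device is Lemma~\ref{basic} (or a direct halving-of-summands construction as in Lemma~\ref{sum}) to guarantee the splitting, combined with a shift or projection endomorphism on the countably-infinitely-many copies to witness non-invariance. Checking that this endomorphism indeed fails to carry $\phi(G)$ into itself is the one place where a short explicit computation is unavoidable.
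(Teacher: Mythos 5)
Your overall strategy is the same as the paper's: reduce to Theorem~\ref{genco-Bassian}, and in each bad sub-case exhibit an injection $\phi:G\to G/H$ whose image is a direct summand that fails to be fully invariant, by halving a countably infinite direct sum of copies of a single group. Two remarks on execution. First, your construction is more complicated than necessary: the paper simply takes $H=\{0\}$ and lets $\phi$ be the identity on $G'$ while sending the $i$th copy of $\mathbb Z(p^\alpha)$ isomorphically onto the $2i$th copy, so that $\phi(G)=G'\oplus(\oplus_{i<\omega}\mathbb Z(p^\alpha)_{2i})$ inside $G/H=G$; there is no need to choose a nontrivial $H$ ``dropping half the summands.'' Note also an index slip: with this $\phi$, the summand $\mathbb Z(p^\alpha)_2$ \emph{is} contained in $\phi(G)$; the witness to non-invariance is an endomorphism of $G$ carrying an even-indexed copy onto an \emph{odd}-indexed one (e.g.\ swapping $\mathbb Z(p^\alpha)_0$ and $\mathbb Z(p^\alpha)_1$). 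Finally, your anticipated ``obstacle'' is not one: to contradict Definition~\ref{fullyco} you only need $\phi(G)$ to fail full invariance; you never need to certify that it is a direct summand.

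There is, however, one genuine hole: in the divisible branch you rule out $T_p$ of infinite rank and then assert in a parenthesis that $G$ has ``divisible torsion-free part of finite rank,'' but nothing in your argument forces this. The group $G=\mathbb Q^{(\omega)}$ is divisible, is generalized co-Bassian by Theorem~\ref{genco-Bassian}(1), and has every $T_p=\{0\}$ of finite rank, yet it is \emph{not} co-Bassian since $G/T$ has infinite rank; so your chain of deductions, as written, would wrongly declare it co-Bassian. You must also apply the halving argument to a summand $\mathbb Q^{(\omega)}$ (exactly as for $\mathbb Z(p^\infty)^{(\omega)}$) to show that a divisible fully generalized co-Bassian group has torsion-free rank finite. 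With that one additional application of your own device, the proof closes and matches the paper's.
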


\begin{proof} It is elementary that if $G$ is co-Bassian, then it is fully generalized co-Bassian.

Conversely, suppose $G$ is fully generalized co-Bassian. In particular, it immediately follows that $G$ is generalized co-Bassian, so that Theorem~\ref{genco-Bassian} is applicable.

Note that if $G=G'\oplus (\oplus_{i<\omega} A_i)$, where each $A_i$ is a copy of a single non-zero group $A$, then $G$ cannot be fully generalized co-Bassian: indeed, for $N=\{0\}$, there is an obvious injection $\phi: G\to G\cong G/N$ such that $$\phi(G)=G'\oplus (\oplus_{i<\omega} A_{2i})\subseteq G.$$ But $\phi(G)$ is certainly not fully invariant in $G\cong G/N$, as asked for.

So, if Theorem~\ref{genco-Bassian}(a) applies, i.e., $G$ is divisible, then it routinely follows that each divisible $T_p$ will have finite $p$-rank and $G/T$ will be divisible of finite rank, completing the proof.

On the other hand, suppose Theorem~\ref{genco-Bassian}(b) applies. We already know that $G/T$ will be divisible of finite rank. And if $p$ is a prime, then we know that $T_p$ must have generalized finite $p$-rank. For such a prime, if $T_p$ did not actually have finite rank, then we could conclude that $G$ has a direct summand of the form
${\mathbb Z}(p^\alpha)^{(\omega)}$, where $\alpha$ is in $\omega\cup \{\infty\}$. But, again by the above observation, this implies that $G$ is {\it not} fully generalized co-Bassian, as expected.
\end{proof}

\subsection{Absolutely generalized co-Bassian groups}

We now have all the ingredients necessary to attack the truthfulness of the following affirmation.

\begin{theorem}\label{major7} The group $G$ is absolutely generalized co-Bassian if, and only if, it is generalized co-Bassian, i.e., $G$ is either divisible, or $G/T$ is divisible of finite rank and each $T_p$ has generalized finite rank.
\end{theorem}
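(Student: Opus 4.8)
The plan is to mimic the proof of Theorem~\ref{major6}, replacing "fully invariant direct summand" by "absolute direct summand" and using Lemma~\ref{Fuchs} (Fuchs' criterion for absolute direct summands) at the crucial point. The easy direction is that generalized co-Bassian implies absolutely generalized co-Bassian: given an injection $\phi:G\to \overline G=G/H$, by Theorem~\ref{genco-Bassian} the image $\phi(G)$ is a direct summand of $\overline G$, so we only need to check it is in fact an \emph{absolute} direct summand of $\overline G$. Here one splits into the two cases of Theorem~\ref{genco-Bassian} applied to $\overline G$ (which is itself generalized co-Bassian, being a factor in which $G$ embeds --- or more simply one argues directly): if $\overline G$ is divisible then every summand is divisible, hence an absolute direct summand by Lemma~\ref{Fuchs}; otherwise $\overline G/\overline T$ is divisible of finite rank and each $\overline T_p$ has generalized finite $p$-rank, and one shows $\overline G/\phi(G)$ is torsion with the $p$-component annihilated by $p^k$ whenever $\phi(G)\setminus p\phi(G)$ contains an element of order $p^k$ --- this is the content of the generalized finite $p$-rank hypothesis, since beyond the first (possibly infinite) homogeneous block $\mathbb Z(p^{\alpha_1})^{(\rho_1)}$ only finitely much torsion of each higher order remains.

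For the converse, suppose $G$ is absolutely generalized co-Bassian. Then it is in particular generalized co-Bassian, since every absolute direct summand is a direct summand; this is immediate from the definitions, so nothing further is needed. Hence Theorem~\ref{genco-Bassian} already gives the stated structural description, and the proof is essentially complete.

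The only subtlety --- and the step I expect to be the main obstacle --- is verifying the easy direction carefully, i.e.\ that in case (2) of Theorem~\ref{genco-Bassian} the embedded copy $\phi(G)$ really does satisfy the Fuchs condition inside $\overline G$. Concretely one writes $\overline T_p\cong \mathbb Z(p^{\alpha_1})^{(\rho_1)}\oplus\cdots\oplus\mathbb Z(p^{\alpha_n})^{(\rho_n)}$ with $\rho_j$ finite for $j>1$, and must control $(\phi(G)\setminus p\phi(G))$: one uses that $\phi$ restricts to an embedding $T_p\hookrightarrow \overline T_p$ and that $\overline G/\phi(G)$ is torsion (by Lemma~\ref{flag}, since $\phi$ induces an isomorphism on the torsion-free quotients of finite rank, so the quotient is torsion) with finite $p$-rank beyond the bottom block, so any element of $\phi(G)\setminus p\phi(G)$ of order $p^k$ forces, via the structure of $\overline T_p$, that $p^k$ kills $(\overline G/\phi(G))_p$. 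If $G$ is divisible the argument is trivial since then $\overline G$ and all summands are divisible.

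Therefore I would organize the write-up as: (i) state that absolutely generalized co-Bassian trivially implies generalized co-Bassian, giving the "only if" direction via Theorem~\ref{genco-Bassian}; (ii) for "if", take an injection $\phi:G\to\overline G=G/H$, invoke Theorem~\ref{genco-Bassian} to get $\phi(G)$ as a direct summand of $\overline G$; (iii) in the divisible case conclude immediately from Lemma~\ref{Fuchs}; (iv) in the remaining case apply Lemma~\ref{flag} to see $\overline G/\phi(G)$ is torsion, then use the generalized finite $p$-rank structure of each $\overline T_p$ together with Lemma~\ref{Fuchs} to conclude $\phi(G)$ is an absolute direct summand, finishing the proof.
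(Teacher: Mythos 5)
Your overall architecture is the same as the paper's: the implication ``absolutely generalized co-Bassian $\Rightarrow$ generalized co-Bassian'' is immediate, and all the work is in showing that for a generalized co-Bassian $G$ the summand $\phi(G)$ of $\overline G=G/N$ is in fact an \emph{absolute} summand via Lemma~\ref{Fuchs}, splitting into the divisible case (trivial) and the case where $G/T$ is divisible of finite rank and each $T_p$ has generalized finite $p$-rank (where Lemma~\ref{flag} shows the complement $C$ with $\overline G=\phi(G)\oplus C$ is torsion). You have also correctly located the one nontrivial verification: that if $\alpha_1=k<\omega$ is the exponent of the bottom homogeneous block of $T_p$, so that $\phi(G)\setminus p\phi(G)$ contains elements of order $p^k$ and no $p$-torsion elements of smaller order, then $p^kC_p=\{0\}$.

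But that verification is exactly the step you do not supply: you assert that the conclusion ``forces, via the structure of $\overline T_p$'' and that ``only finitely much torsion of each higher order remains,'' and neither phrase is an argument. Finiteness of the rank of $p^kT_p$ does not by itself rule out $C_p$ containing, say, a cyclic summand of order $p^{\alpha_2}>p^k$; you must show that the finitely much high-order torsion of $\overline T_p$ is already entirely accounted for by $\phi(T_p)$. The paper closes this gap with a specific device: $\pi$ and $\phi$ restrict to a surjection and an injection $p^kT_p\to p^k\overline T_p$, and $p^kT_p$ is a $p$-group of finite rank, hence co-Bassian by Theorem~\ref{co-Bassian}; therefore the injection is onto, $\phi(p^kT_p)=p^k\overline T_p$, and since $\overline T_p=\phi(T_p)\oplus C_p$ gives $p^k\overline T_p=\phi(p^kT_p)\oplus p^kC_p$, we get $p^kC_p=\{0\}$ as required. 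Some appeal of this kind (or an equivalent Ulm-invariant/rank count showing the embedding $p^kT_p\hookrightarrow p^k\overline T_p$ is surjective) is indispensable, and without it your proof is incomplete at precisely the point you flagged as the main obstacle.
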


\begin{proof} Certainly, if $G$ is absolutely generalized co-Bassian, then it must be generalized co-Bassian. So, assume $G$ is generalized co-Bassian. So, if $$\pi, \phi:G\to G/N=: \overline G$$ are surjective and injective homomorphisms, we need to verify that $\phi(G)$ is an absolute direct summand of $\overline G$.

Clearly, if $G$ is divisible, then so is $\phi(G)$, so that it is an absolute direct summand, as required.
So, we may assume that $G/T$ is divisible of finite rank and each $T_p$ has generalized finite rank.
By Lemma~\ref{flag}, $\phi$ induces an isomorphism $\phi':G/T\cong \overline G/\overline T$ on the corresponding  quotients. In addition, $N\subseteq T$ and, consequently, $$\overline T=T/N\cong \oplus_p T_p/N_p\cong \oplus_p \overline T_p$$
will be the torsion subgroup of $\overline G$.

Since $G$ is generalized co-Bassian, it must be that $\overline G= \phi(G)\oplus C$. Furthermore, as $\phi$ induces an isomorphism $\phi':G/T\cong \overline G/\overline T$ on the corresponding  quotients, it follows that $C\subseteq  \overline T$, so that $C$ must be a torsion group.

To prove $\phi(G)$ is an absolute direct summand of $\overline G$, let $p$ be some prime. Since $T_p$ has generalized finite rank, there is a decomposition $T_p= H_p\oplus F_p$, where either $H_p$ is divisible and $F_p=\{0\}$, or there is an $k<\omega$ such that $H_p\cong \mathbb Z(p^k)^{(\gamma_p)}$ and $F_p$ has finite $p$-rank and no direct summand isomorphic to $\mathbb Z(p^j)$ for any $j\leq k$.

If, however, $T_p=H_p$ is divisible, then there are no elements in $\phi(G)\setminus p\phi(G)$ of order $p^j$ for any  $j<\omega$. So, suppose $T_p=H_p\oplus F_p$ and $k<\omega$ is as above. It follows that the maps $\pi, \phi$ restrict to surjective and injective homomorphism $p^k T_p\to p^k \overline T_p$, respectively. Consequently, since $p^k T_p$ is a $p$-group of finite rank, so that it is co-Bassian, we have $\phi: p^k \phi(T_p)=p^k \overline T_p$. This implies that $p^k C_p=\{0\}$, so that by virtue of Lemma~\ref{Fuchs}, the image $\phi(G)$ is an absolute direct summand of $\overline G$, as stated.
\end{proof}

\medskip
\medskip

\noindent {\bf Funding:} The work of the first-named author, A.R. Chekhlov, is supported by the Ministry of Science and Higher Education of Russia under agreement No. 075-02-2023-943. The work of the second-named author, P.V. Danchev, is partially supported by the Junta de Andaluc\'ia, Grant FQM 264.

\vskip3.0pc

\end{document}